
\documentclass[twoside,11pt]{article}
\usepackage{color}
\usepackage{algorithm,bm}
\usepackage{enumerate}
\usepackage{graphicx}
\usepackage{epstopdf}
\usepackage{authblk}
\usepackage{amsfonts}
\usepackage[square,sort,comma,numbers]{natbib}



\topmargin 0.25 in
\headsep -0.15in
\textheight 8.5in
\oddsidemargin -0.08in
\evensidemargin -0.08in
\textwidth 6.4in
\usepackage{amsmath}    
\usepackage[table]{xcolor}
\usepackage[T1]{fontenc}
\usepackage[utf8]{inputenc}

\usepackage{epstopdf}
\usepackage{amsmath}
\usepackage{epsf}
\usepackage{graphicx}


\def\Tk{T}
\def\Tkp{T'}
\def\F{\mathcal F}
\def\s{V}

\def\ts{\bar{V}}
\def\vn{\Sigma_n} 
\def\th{g}
\def\tf{u}

\def\bsig{\overline{\Sigma}}
\newcommand{\aseq}{\mbox{$\;\stackrel{\mbox{\tiny\rm a.s.}}{=}\;$}}

\newcommand{\defeq}{\stackrel{def}{=}}

%
%

\usepackage{comment,url,algorithm,algorithmic,graphicx,subcaption,relsize}
\usepackage{amssymb,amsfonts,amsmath,amsthm,amscd,dsfont,mathrsfs,mathtools,nicefrac}
\usepackage{float,psfrag,epsfig,color,xcolor,url,hyperref}
\usepackage{epstopdf,bbm,mathtools,enumitem}

\footnotesep 14pt
\floatsep 27pt plus 2pt minus 4pt   
\textfloatsep 40pt plus 2pt minus 4pt
\intextsep 27pt plus 4pt minus 4pt
\topmargin 0.25 in
\headsep -0.15in
\textheight 8.5in
\oddsidemargin -0.08in
\evensidemargin -0.08in
\textwidth 6.4in

\newcommand{\dee}{\mathrm d}
\newcommand{\dt}{\,\dee t}

\newcommand{\dz}{\,\dee z}



\def\balign#1\ealign{\begin{align}#1\end{align}}
\def\baligns#1\ealigns{\begin{align*}#1\end{align*}}
\def\balignat#1\ealign{\begin{alignat}#1\end{alignat}}
\def\balignats#1\ealigns{\begin{alignat*}#1\end{alignat*}}
\def\bitemize#1\eitemize{\begin{itemize}#1\end{itemize}}
\def\benumerate#1\eenumerate{\begin{enumerate}#1\end{enumerate}}

\newenvironment{talign*}
 {\csname align*\endcsname}
 {\endalign}
\newenvironment{talign}
 {\csname align\endcsname}
 {\endalign}

\def\balignst#1\ealignst{\begin{talign*}#1\end{talign*}}
\def\balignt#1\ealignt{\begin{talign}#1\end{talign}}


\let\originalleft\left
\let\originalright\right
\renewcommand{\left}{\mathopen{}\mathclose\bgroup\originalleft}
\renewcommand{\right}{\aftergroup\egroup\originalright}


\def\tinycitep*#1{{\tiny\citep*{#1}}}
\def\tinycitealt*#1{{\tiny\citealt*{#1}}}
\def\tinycite*#1{{\tiny\cite*{#1}}}
\def\smallcitep*#1{{\scriptsize\citep*{#1}}}
\def\smallcitealt*#1{{\scriptsize\citealt*{#1}}}
\def\smallcite*#1{{\scriptsize\cite*{#1}}}



\def\mbb#1{\mathbb{#1}}

\def\mrm#1{\mathrm{#1}}



\def\reals{\mathbb{R}} 

\def\<{\left\langle} 
\def\>{\right\rangle}

\def\defeq{\triangleq} 

\def\nhalf{\nicefrac{1}{2}}


\def\norm#1{\left\|{#1}\right\|} 
\newcommand{\twonorm}[1]{\norm{#1}_2} 
\newcommand{\opnorm}[1]{\norm{#1}_{\text{op}}} 
\newcommand{\fronorm}[1]{\norm{#1}_{\text{F}}} 
\newcommand{\nucnorm}[1]{\norm{#1}_{*}} 
\newcommand{\inner}[1]{{\langle #1 \rangle}} 




\def\E{\mbb{E}} 


\DeclareMathOperator{\Tr}{Tr} 
\def\T{\top} 
\def\Var{\mrm{Var}} 





\newcommand{\Gsn}{\mathcal{N}}

\newcommand{\grad}{\nabla}
\newcommand{\Hess}{\nabla^2} 



\providecommand{\argmin}{\mathop\mathrm{arg min}}




\ifdefined\nonewproofenvironments\else
\ifdefined\ispres\else
\newtheorem{theorem}{Theorem}
\newtheorem{lemma}[theorem]{Lemma}
\newtheorem{corollary}[theorem]{Corollary}

\renewenvironment{proof}{\noindent\textbf{Proof.}\hspace*{.3em}}{\qed\\}
\newenvironment{proof-sketch}{\noindent\textbf{Proof Sketch}
  \hspace*{0.em}}{\qed\bigskip\\}
\newenvironment{proof-idea}{\noindent\textbf{Proof Idea}
  \hspace*{0.em}}{\qed\bigskip\\}
\newenvironment{proof-of-lemma}[1][{}]{\noindent\textbf{Proof of Lemma {#1}.}
  \hspace*{0.em}}{\qed\\}
\newenvironment{proof-of-corollary}[1][{}]{\noindent\textbf{Proof of Corollary {#1}.}
  \hspace*{0.em}}{\qed\\}
\newenvironment{proof-of-theorem}[1][{}]{\noindent\textbf{Proof of Theorem {#1}.}
  \hspace*{0.em}}{\qed\\}
\newenvironment{proof-attempt}{\noindent\textbf{Proof Attempt}
  \hspace*{0.em}}{\qed\bigskip\\}


\fi

\newtheorem{assumption}{Assumption}
\fi
\makeatletter
\@addtoreset{equation}{section}
\makeatother

\hypersetup{
  colorlinks,
  linkcolor={red!50!black},
  citecolor={blue!50!black},
  urlcolor={blue!80!black}
}

\newcommand{\eq}[1]{\begin{align}#1\end{align}}
\newcommand{\eqn}[1]{\begin{align*}#1\end{align*}}

\newcommand{\Ex}[1]{\mathbb{E}\left[#1\right]}

\newcommand{\ind}[1]{{\mathbbm{1}}_{\{ #1 \}} }
\newcommand{\abs}[1]{\left|#1\right|}


\newcommand{\handout}[5]{
  \noindent
  \begin{center}
    \framebox{
      \vbox{
        \hbox to 5.78in { {\bf \title } \hfill #2 }
        \vspace{4mm}
        \hbox to 5.78in { {\Large \hfill #5  \hfill} }
        \vspace{2mm}
        \hbox to 5.78in { {\em #3 \hfill #4} }
      }
    }
  \end{center}
  \vspace*{4mm}
}





\title{Normal Approximation for Stochastic Gradient Descent \\via Non-Asymptotic Rates of Martingale CLT\footnote{Authors are listed in alphabetical order. This paper provides a solution for an open problem formulated at the AIM workshop on Stein's method and Applications in High-dimensional Statistics, 2018 (problem (4) in page 2 of https://aimath.org/pastworkshops/steinhdrep.pdf by KB). We thank Jay Bartroff, Larry Goldstein, Stanislav Minsker, and Gesine Reinert for organizing a stimulating workshop, and the participants for several discussions.  MAE is partially funded by CIFAR AI Chairs program at the Vector Institute. }}
\author[1]{Andreas Anastasiou\thanks{a.anastasiou@lse.ac.uk}}
\author[2]{Krishnakumar Balasubramanian\thanks{kbala@ucdavis.edu}}
\author[3]{Murat A. Erdogdu\thanks{erdogdu@cs.toronto.edu}}
\affil[1]{Department of Statistics, London School of Economics and Political Science}
\affil[2]{Department of Statistics, University of California, Davis}
\affil[3]{Computer Science and Statistical Sciences, University of Toronto, and Vector Institute}

\begin{document}


\maketitle

\begin{abstract}
  We provide non-asymptotic convergence rates of the Polyak-Ruppert averaged stochastic gradient descent (SGD) to a normal random vector for a class of twice-differentiable test functions. A crucial intermediate step is proving a non-asymptotic martingale central limit theorem (CLT), i.e.,
  establishing the rates of convergence of a multivariate martingale difference sequence to a normal random vector, which might be of independent interest. We obtain the explicit rates for the multivariate martingale CLT using a combination of Stein's method and Lindeberg's argument, which is then used in conjunction with a non-asymptotic analysis of averaged SGD proposed in~\cite{polyak1992acceleration}.
  Our results have potentially interesting consequences for computing confidence intervals for parameter estimation with SGD
  and constructing hypothesis tests with SGD that are valid in a non-asymptotic sense.
\end{abstract}

\newpage
 \section{Introduction}\label{sec:intro}
 Consider the standard parametric population M-estimation or learning problem, of the form
\begin{align}\label{eq:main_prob}
\theta_* = \min_{\theta \in \mathbb{R}^d} \left\{ f(\theta)  = \E_Z [F(\theta, Z)]= \int F(\theta, Z) \, dP(Z) \right\}.
\end{align}
Here, the function $F(\theta, Z)$ is typically the loss function composed with functions from hypothesis class parametrized by $\theta \in \mathbb{R}^d$, and depends on the random variable $Z \in \mathbb{R}^d$. The distribution $P(Z)$ is typically unknown and hence the sample M-Estimator for the above problem involves sampling $N$ i.i.d. samples $\{Z_i\}_{i=1}^N$ from the distribution $P(Z)$ and computing the minimizer of the sample average~\citep{van2000asymptotic}. That is
\begin{align}\label{eq:samplemest}
\hat \theta_N = \argmin_{\theta \in \mathbb{R}^d} \left\{ \frac{1}{N} \sum_{i=1}^N F(\theta, Z_i)\right\}.
\end{align}
Such an approach is also called as Stochastic Average Approximation (SAA) method in the optimization literature~\citep{nemirovski2009robust}. When the loss function $F$ is the negative log-likelihood function, the above approach correspond to the popular Maximum Likelihood Estimator (MLE).  We emphasize that one still needs to specify an algorithm to compute the minimizer $\hat\theta_N$ in~\eqref{eq:samplemest}, in practice.  Asymptotic properties of the sample M-Estimator and the MLE, in particular consistency and asymptotic normality, have been studied for decades in the statistics literature, since the pioneering work of \cite{cramer1946mathematical}. The asymptotic normality result of the sample M-estimator in~\eqref{eq:samplemest}, in particular has proven to be extremely crucial for several inferential tasks. A critical drawback of such asymptotic results is that they are typically established for the actual minimizer $\hat \theta_N$ and \emph{not} the computational algorithm used.

Stochastic Gradient Descent (SGD) provides a direct method for solving the stochastic optimization problem \eqref{eq:main_prob} which also corresponds to the population M-estimation problem. Specifically, the SGD update equation is given by the following iterative rule: $\theta_{t} = \theta_{t-1} - \eta_t g(\theta_t)$, where $g(\theta_t)$ is the \emph{stochastic} gradient at  $\theta_t$ of the objective function $f(\theta)$.  SGD has been the algorithm of choice for parameter estimation in several statistical problems due to its simplicity, online nature and superior performance~\citep{nemirovski2009robust}. Indeed there has been an ever increasing interest in analyzing the theoretical properties of SGD in the learning theory literature in the last decade under various assumption on the problem structure. We refer the reader to~\cite{kushner2003stochastic, borkar2009stochastic, shapiro2009lectures, moulines2011non, duchi2011adaptive, rakhlin2011making, lan2012validation,ghadimi2012optimal, dieuleveut2016nonparametric} for a non-exhaustive list of some recent developments.  A notable drawback of such works is that they mainly focus on quantifying the accuracy of the SGD in terms of optimization and/or statistical estimation errors. 

While uncertainty quantification of the sample M-Estimators in~\eqref{eq:samplemest} has been a topic of intense studies in the statistical literature (with the literature being too large to summarize) there has been relatively less attention on establishing asymptotic normality of SGD, the practical algorithm of choice. Quantifying the uncertainty of SGD began in the works of~\cite{chung1954stochastic, sacks1958asymptotic, fabian1968asymptotic, ruppert1988efficient, shapiro1989asymptotic}, with~\cite{polyak1992acceleration} providing a definitive result. A crucial step in~\cite{polyak1992acceleration}, for uncertainty quantification is that of establishing the asymptotic normality of the averaged SGD iterates, which in turn depends on a Martingale Central Limit Theorem established in the probability theory literature~\citep{liptser2012theory}. Recent works, for example,~\cite{chen2016statistical, su2018statistical, duchi2016local, toulis2017asymptotic,fang2018online}, leverage the asymptotic normality analysis of~\cite{polyak1992acceleration} and provide confidence intervals for SGD that are valid in an asymptotic sense.
 
While asymptotic inference based on the  asymptotic normality result is interesting and leads to qualitative confidence intervals, their validity is often times questionable due to their asymptotic nature; in practical scenarios the SGD algorithm is only run for a \emph{finite} number of steps. Indeed, even for the well-studied case of the sample M-estimator (or MLE) in~~\eqref{eq:samplemest}, the role of asymptotic normality, i.e., the case of $N \to \infty$, for inference is questionable because of its \emph{qualitative} nature; see for example \cite{geyer2013asymptotics, kuchibhotla2018deterministic}. A more \emph{quantitative} approach is to obtain explicit bounds for the rate of convergence to normality in a suitable metric. Indeed such results for the MLE have recently been obtained recently. Using Stein's method, ~\cite{anastasiou2018bounds} establish the rate of convergence to normality of $\hat\theta_N$ in the case of multivariate MLE. 
A main drawback of such a result, similar to the asymptotic results, is that the rates are established only for the actual minimizer $\hat \theta_N$, and not the estimator that is computed  in practice using a specific algorithm.  



In this work, we study the problem of \emph{quantifying} the rate of convergence to normality of the Polyak-Ruppert averaged SGD algorithm. Indeed SGD and its several variants are the practical algorithm of choice used to solve the stochastic optimization and population M-estimation problems in practice, and it is crucial to understand their finite sample convergence to normality for the purpose of non-asymptotic inference.  A main step in order to obtain such results is to first establish the rate of convergence to normality of a multivariate martingale difference sequence. We use a combination of Stein's method and Lindeberg's telescoping sum argument to establish such a result. We then adapt the proof of~\cite{polyak1992acceleration} and use it in conjunction with our martingale result to provide \emph{qualitative} bounds for the SGD iterates to normality. Our results have consequences for constructing confidence intervals for parameter estimation via SGD, that are valid in a non-asymptotic sense. Furthermore, a wide variety of statistical hypothesis tests could also be formulated based on functionals of solutions to convex optimization problems~\citep{goldenshluger2015hypothesis} that are typically solved using SGD in practice; we refer the reader to the excellent survey on this topic by~\cite{juditsky2018lectures}. Our results could be used in conjunction with such tests to obtain practical quantitative hypothesis tests that are valid in a non-asymptotic sense.

\vspace{0.1in}
\noindent \textbf{Our Contributions:} To summarize the discussion above, in this paper, we make the following contributions.
\begin{itemize}
\vspace{-0.1in}
\item In Theorem~\ref{thm:martingale-clt}, Corollaries~\ref{cor:clt-psd-cov} and \ref{cor:clt-non-psd-cov},
  we prove a non-asymptotic multivariate martingale CLT, i.e.,
  we establish the explicit rates of convergence of a multivariate martingale difference sequence to
  a normal random vector for the class of twice differentiable functions.

\item In Theorems~\ref{thm:linearsetting} and~\ref{thm:sgdsetting},
  we prove the rate of convergence of the Polyak-Ruppert averaged SGD iterates
  to a normal random vector for solving system of linear equations and optimizing strongly-convex functions respectively.
\end{itemize}

The rest of the paper is organized as follows. Section~\ref{sec:notation} introduces our notation. Section~\ref{sec:martingale-clt} contains all the relevant results for multivariate martingales and Section~\ref{sec:sgd-rates} contains our results on SGD. We conclude the paper in Section~\ref{sec:conclusion} with a brief discussion and future work.
All the proofs are provided in the Appendix Sections~\ref{sec:proof-clt} - \ref{appendixend}.



\subsection{Notation}\label{sec:notation}
For a real vector $v \in \reals^d$ and a real tensor
$T \in \reals^{d_1 \times d_2 \times \cdots \times d_m}$,
we define the operator norm as
$\opnorm{v} \defeq \twonorm{v}$,
and $\opnorm{T} = \sup_{\twonorm{u}=1}\opnorm{T[u]}$ defined recursively.
For matrices $A,B \in \reals^{d \times d}$, we use $A\succ B$ ($A\succeq B$) to indicate that $A-B$ is positive (semi)definite.
We use $\twonorm{A}$, $\fronorm{A}$, $\nucnorm{A}$ to denote the operator, Frobenius, and nuclear norms, respectively.
For a $k$ times differentiable function $f$ and $i\geq 1$, we define
\eq{
  M_0(f) = \sup_{x\in\reals^d}\opnorm{f(x)},\ 
  \text{ and }\ 
  M_i(f) = \sup_{x,y\in\reals^d,x\neq y}\tfrac{\opnorm{\nabla^{i-1}f(x)
      - \nabla^{i-1} f(y)}}{\twonorm{x-y}}.
}
Additionally, throughout the text we let $[n]$ denote the set $[n] = \{1,2,...,n\}$.
For random variables $X$ and $Y$, we use $X\sim\Gsn_d(\mu, \Sigma)$ to indicate that
$X$ is a $d$-variate Gaussian random vector with mean $\mu \in \reals^d$,
and covariance $\Sigma \in \reals^{d \times d}$, and we use $X\aseq Y$ to denote
$X$ is equal to $Y$ almost surely.

\section{Convergence Rates of a Multivariate Martingale CLT}
\label{sec:martingale-clt}
In this section, we prove a multivariate martingale central limit theorem (CLT)
with explicit rates and constants.
Convergence rates of univariate martingale CLT have been studied extensively,
and even a synthetic review would go beyond the page limits --
see for example \cite{bolthausen1982exact,rinott1999some,chow2012probability,mourrat2013rate,hall2014martingale,rollin2018quantitative} and the references therein.
In the standard literature, convergence rates are established
for one-dimensional random variables using Lindeberg's telescoping sum argument \citep{bolthausen1982exact,mourrat2013rate,fan2019exact}.
In order to obtain convergence rates with explicit constants, we adapt an approach from \cite{rollin2018quantitative}
to the multivariate setting, which is
based on a combination of Stein's method \citep{stein1986approximate}
and Lindeberg's telescoping sum argument \citep{bolthausen1982exact}.
We state the following non-asymptotic multivariate martingale CLT.
\begin{theorem}\label{thm:martingale-clt}
  Let $X_1, X_2, ..., X_n\in \reals^d$ be a martingale difference sequence
  adapted to a filtration $\F_0,\F_1,...,\F_n$ with $\s_k = \E[X_kX_k^\T|\F_{k-1}]$ almost surely.
  Denote their summation by $S_n = \sum_{i=1}^nX_i$, and for $k \in [n]$,
  the partial covariance by $P_{k} = \sum_{i=k}^n\s_i$,
  and the variance of the summation by $\vn = \Var(S_n)$.
  If we assume that
  \eq{\label{asmp:almost-sure} 
    P_1 = \vn \ \text{ almost surely},
  }
    then for $Z \sim \Gsn_d(0, I)$
   and $h : \reals^d \to \reals$ a twice differentiable function,
  we have
  \eq{\label{eq:mart-clt-bound}
    \abs{\Ex{h(\vn^{-\nhalf}S_n) - \Ex{h(Z)}}}
    \leq \frac{3\pi}{8}\sqrt{d}
    M_2(h)\sum_{k=1}^n \Ex{\|\vn^{\nhalf} P_k^{-1} \vn^{\nhalf}\|_2^{\nhalf}
      \|\vn^{-\nhalf}X_k\|_2^3}.
  }
\end{theorem}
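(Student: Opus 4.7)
The proof plan is to combine Lindeberg's telescoping exchange argument with Gaussian smoothing in the spirit of Stein's method, generalizing the univariate strategy of \cite{rollin2018quantitative} to the multivariate martingale setting. The almost-sure condition $P_1=\vn$ is the linchpin of the construction: because each $\Sigma_i$ is $\F_{i-1}$-measurable, this identity forces $P_k = \vn - \sum_{i<k}\Sigma_i$ to be $\F_{k-1}$-measurable for every $k$, which is precisely what is needed to build auxiliary independent Gaussian vectors with the correct conditional covariances.

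First I would enlarge the probability space and construct independent Gaussians $G_k$ and $\tilde Z_{k+1}$ with $G_k\mid\F_{k-1}\sim\Gsn_d(0,\Sigma_k)$ and $\tilde Z_{k+1}\mid\F_{k-1}\sim\Gsn_d(0,P_{k+1})$, both independent of the $X_j$'s and of each other. Because $P_k=\Sigma_k+P_{k+1}$, these may be coupled so that $\tilde Z_k \eqd G_k + \tilde Z_{k+1}$ conditionally on $\F_{k-1}$. The interpolating sequence
\begin{align*}
W_k := \vn^{-\nhalf}\Big(\sum_{i=1}^k X_i + \tilde Z_{k+1}\Big),\qquad k=0,1,\dots,n,
\end{align*}
then satisfies $W_0\sim\Gsn_d(0,I)$ (using $P_1=\vn$) and $W_n=\vn^{-\nhalf}S_n$, and the telescoping identity $\E[h(\vn^{-\nhalf}S_n)]-\E[h(Z)] = \sum_{k=1}^n\E[h(W_k)-h(W_{k-1})]$ reduces the problem to a per-step bound. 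Setting $U_{k-1}:=\vn^{-\nhalf}(X_1+\cdots+X_{k-1})$ and $\phi_k(x):=\E[h(x+\vn^{-\nhalf}\tilde Z_{k+1})\mid\F_{k-1}]$, iterated expectations combined with the coupling give $\E[h(W_k)-h(W_{k-1})\mid\F_{k-1}] = \E[\phi_k(U_{k-1}+\vn^{-\nhalf}X_k)-\phi_k(U_{k-1}+\vn^{-\nhalf}G_k)\mid\F_{k-1}]$, i.e., an exchange of $X_k$ for its Gaussian surrogate $G_k$ inside the smoothed function $\phi_k$.

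The next step is a third-order Taylor expansion of $\phi_k$ about $U_{k-1}$. The constant terms cancel, the linear terms vanish because $\E[X_k\mid\F_{k-1}]=\E[G_k\mid\F_{k-1}]=0$, and the quadratic terms cancel exactly because $X_k$ and $G_k$ share the conditional covariance $\Sigma_k$. Only the third-order remainders survive, each controlled by $\tfrac{1}{6}\opnorm{\nabla^3\phi_k}$ times $\|\vn^{-\nhalf}X_k\|^3$ or $\|\vn^{-\nhalf}G_k\|^3$. The main technical step, which I expect to be the most delicate, is bounding $\opnorm{\nabla^3\phi_k}$ using only $M_2(h)$, since $h$ need not be thrice differentiable. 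Gaussian smoothing supplies the extra regularity: because $\phi_k$ is a convolution with a Gaussian of conditional covariance $A_k:=\vn^{-\nhalf}P_{k+1}\vn^{-\nhalf}$, Stein's integration-by-parts identity transfers one derivative from $h$ to the Gaussian density and yields
\begin{align*}
\nabla^3\phi_k(x)[u,v,w] = \E\!\left[\nabla^2 h\bigl(x+\vn^{-\nhalf}\tilde Z_{k+1}\bigr)[v,w]\cdot\bigl(u^\T A_k^{-1}\vn^{-\nhalf}\tilde Z_{k+1}\bigr)\,\Big|\,\F_{k-1}\right].
\end{align*}
Combining $\opnorm{\nabla^2 h}\leq M_2(h)$ with the Cauchy-Schwarz/trace bound $\E\|A_k^{-1}\vn^{-\nhalf}\tilde Z_{k+1}\|\leq\sqrt{\Tr(A_k^{-1})}\leq \sqrt d\,\opnorm{A_k^{-1}}^{1/2}$ then produces $\opnorm{\nabla^3\phi_k}\leq\sqrt d\,M_2(h)\,\opnorm{\vn^{\nhalf}P_{k+1}^{-1}\vn^{\nhalf}}^{1/2}$, which is the source of both the $\sqrt d$ factor and the partial-covariance factor appearing in the theorem.

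Finally I would assemble the per-step bounds and compare the Gaussian third moment to the target: a direct Cauchy-Schwarz computation for the centered Gaussian $G_k$ gives $\E[\|\vn^{-\nhalf}G_k\|^3\mid\F_{k-1}]\leq C\,\E[\|\vn^{-\nhalf}G_k\|^2\mid\F_{k-1}]^{3/2} = C\,\E[\|\vn^{-\nhalf}X_k\|^2\mid\F_{k-1}]^{3/2}$, and Jensen's inequality for the convex map $t\mapsto t^{3/2}$ upgrades this to $C\,\E[\|\vn^{-\nhalf}X_k\|^3\mid\F_{k-1}]$. Summing over $k$ and taking total expectations then yields a bound of the form in \eqref{eq:mart-clt-bound}; the precise prefactor $\tfrac{3\pi}{8}$ requires careful tracking of constants through Stein's identity and the Gaussian moment comparison. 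A minor bookkeeping point is the harmless index shift between the $P_{k+1}$ that arises naturally from smoothing by $\tilde Z_{k+1}$ and the $P_k$ appearing in the theorem, which is absorbed by re-indexing the interpolation so that the smoothing at step $k$ also includes the $k$-th Gaussian surrogate.
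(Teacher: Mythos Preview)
Your interpolation sequence $W_k=\vn^{-\nhalf}(S_k+\tilde Z_{k+1})$ and the telescoping reduction to a per–step exchange are exactly what the paper does (its $S_k+T_{k+1}$). Where your route genuinely diverges is in how the per–step term is controlled. The paper does \emph{not} introduce a Gaussian surrogate $G_k$ and Taylor-expand the convolution $\phi_k=h\ast\Gsn(0,P_{k+1})$. Instead it invokes the Stein equation with parameters $(\mu,\Sigma)=(S_{k-1},P_k)$: writing $h(x)-\E[h(S_{k-1}+P_k^{\nhalf}Y)]=\langle P_k,\Hess f_k(x)\rangle-\langle x-S_{k-1},\grad f_k(x)\rangle$, it evaluates at $x=S_{k-1}+X_k+T_{k+1}$, uses the Gaussian Stein identity on $T_{k+1}$ to reduce $P_k$ to $\Sigma_k$, and then Taylor-expands $\grad f_k$ and $\Hess f_k$ in $X_k$ only. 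The needed third-derivative control is supplied by the Stein-factor bound $M_3(f_k)\le M_2(h)\,\tfrac{\pi}{4}\sqrt{d}\,\|P_k^{-\nhalf}\|_2$ (their Lemma~\ref{lem:stein-factor-bound}), and the two remaining integral remainders combine to the prefactor $\tfrac{3}{2}\cdot\tfrac{\pi}{4}=\tfrac{3\pi}{8}$.

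The distinction is not cosmetic. Your smoothing kernel at step $k$ has covariance $P_{k+1}$, so your bound on $\opnorm{\nabla^3\phi_k}$ carries $\|\vn^{\nhalf}P_{k+1}^{-1}\vn^{\nhalf}\|^{1/2}$, not $\|\vn^{\nhalf}P_k^{-1}\vn^{\nhalf}\|^{1/2}$. Since $P_k\succeq P_{k+1}$ this is always the weaker factor, and at $k=n$ it collapses entirely: $P_{n+1}=0$, so $\phi_n=h$ and there is no Gaussian convolution to manufacture a third derivative from $M_2(h)$ alone. Your proposed fix, ``re-index so that the smoothing at step $k$ also includes the $k$-th Gaussian surrogate,'' does not go through: once $G_k$ is folded into the smoothing so that the kernel has covariance $P_k$, the variable $G_k$ is no longer independent of $\tilde Z_k$, and the exchange $X_k\leftrightarrow G_k$ inside the smoothed function can no longer be written as a clean Taylor comparison at a common base point. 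This is exactly the place where the Stein-equation solution $f_k$ (an integral over the whole OU semigroup with target covariance $P_k$) does work that a single Gaussian convolution by $T_{k+1}$ cannot. A second, smaller consequence of your route is the extra Gaussian third moment $\E[\|\vn^{-\nhalf}G_k\|^3\mid\F_{k-1}]$, which you then have to dominate by $\E[\|\vn^{-\nhalf}X_k\|^3\mid\F_{k-1}]$; the paper's argument never produces such a term, and this is why its constant comes out cleanly as $\tfrac{3\pi}{8}$.
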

Theorem~\ref{thm:martingale-clt} provides a non-asymptotic martingale CLT result under
the assumption~\eqref{asmp:almost-sure}. We emphasize that all the constants in the bound are explicit; yet the bound cannot be expressed in terms of the Wasserstein distance
since the second derivative of the test function $h$ appears on the right hand side.
This is the main difference between our result and that of \cite{rollin2018quantitative};
the bounds in \cite{rollin2018quantitative} are for the univariate case and in Wasserstein metric whereas Theorem~\ref{thm:martingale-clt} establishes the bounds in the multivariate setting but the resulting bound cannot be expressed in terms of Wasserstein distance
due to a subtlety of the Stein's method in high dimensions.

The function $h$ in Theorem~\ref{thm:martingale-clt} is often referred
to as the \emph{test function}, and the bound \eqref{eq:mart-clt-bound}
depends only on its smoothness, i.e., we require $M_2(h) < \infty$.
The remaining terms will be determined by the characteristics of the
martingale difference sequence.
For example, for a sequence with positive definite conditional covariances,
Theorem~\ref{thm:martingale-clt} yields the following result.

\begin{corollary}\label{cor:clt-psd-cov}
Instantiate the notation and assumptions of Theorem~\ref{thm:martingale-clt}.
  For a martingale difference sequence satisfying $\alpha I \preceq \s_k  \preceq \beta I$
  almost surely for all $k \in [n]$ and
  $\E\big[\twonorm{X_k}^3\big]\leq \gamma d^{3/2}$,
  we have
  \eq{\label{eq:mart-clt-psd-bound}
    \abs{\Ex{h(\vn^{-\nhalf}S_n) - \Ex{h(Z)}}}
    \leq& \frac{3\pi\gamma\sqrt{\beta}}{4\alpha^{2}}
    M_2(h)\frac{d^2}{ \sqrt{n}}.
  }
\end{corollary}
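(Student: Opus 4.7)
The plan is to start from the general bound \eqref{eq:mart-clt-bound} of Theorem~\ref{thm:martingale-clt} and specialize each factor under the two-sided spectral assumption $\alpha I \preceq \Sigma_k \preceq \beta I$ and the moment assumption $\E[\|X_k\|_2^3] \leq \gamma d^{3/2}$. Since $\Sigma_n = P_1 = \sum_{i=1}^n \Sigma_i$ almost surely and each $\Sigma_i$ is PSD-sandwiched, summing the bounds gives the deterministic (a.s.) envelopes
\eq{
\alpha n\, I \preceq \Sigma_n \preceq \beta n\, I, \qquad \alpha(n-k+1)\, I \preceq P_k \preceq \beta(n-k+1)\, I,
}
for every $k \in [n]$. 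In particular $\|P_k^{-1}\|_2 \leq 1/(\alpha(n-k+1))$ and $\|\Sigma_n\|_2 \leq \beta n$, both a.s.

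First I would bound the $P_k$-factor pointwise. By submultiplicativity,
\eq{
\|\Sigma_n^{\nhalf} P_k^{-1} \Sigma_n^{\nhalf}\|_2 \;\leq\; \|\Sigma_n\|_2 \, \|P_k^{-1}\|_2 \;\leq\; \frac{\beta n}{\alpha(n-k+1)} \quad \text{a.s.},
}
so taking the square root gives $\|\Sigma_n^{\nhalf} P_k^{-1} \Sigma_n^{\nhalf}\|_2^{\nhalf} \leq \sqrt{\beta n / (\alpha(n-k+1))}$ almost surely. Next I would bound the $X_k$-factor in expectation. Since $\Sigma_n^{-1} \preceq (\alpha n)^{-1} I$ almost surely, we have $\|\Sigma_n^{-\nhalf} X_k\|_2^3 \leq (\alpha n)^{-3/2} \|X_k\|_2^3$, and the moment assumption yields $\E[\|\Sigma_n^{-\nhalf} X_k\|_2^3] \leq \gamma d^{3/2}/(\alpha n)^{3/2}$.

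Because the $P_k$-bound is deterministic, I can pull it out of the expectation and then sum. Multiplying the two bounds, the $k$-th term of the sum in \eqref{eq:mart-clt-bound} is at most
\eq{
\sqrt{\frac{\beta n}{\alpha(n-k+1)}} \cdot \frac{\gamma d^{3/2}}{(\alpha n)^{3/2}} \;=\; \frac{\gamma\sqrt{\beta}\, d^{3/2}}{\alpha^{2} n}\cdot \frac{1}{\sqrt{n-k+1}}.
}
Using the standard estimate $\sum_{k=1}^n 1/\sqrt{n-k+1} = \sum_{j=1}^n 1/\sqrt{j} \leq 2\sqrt{n}$, the total sum is at most $2\gamma \sqrt{\beta}\, d^{3/2}/(\alpha^{2}\sqrt{n})$.

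Plugging this into \eqref{eq:mart-clt-bound} contributes a factor of $\sqrt{d}$ from the prefactor and yields $(3\pi/8)\sqrt{d} \cdot M_2(h) \cdot 2\gamma\sqrt{\beta}\, d^{3/2}/(\alpha^{2}\sqrt{n}) = (3\pi\gamma\sqrt{\beta}/(4\alpha^{2}))\, M_2(h)\, d^{2}/\sqrt{n}$, which is exactly the claimed bound. There is no real obstacle here: the argument is routine bookkeeping of spectral bounds and the harmonic-type sum $\sum 1/\sqrt{j}$. The only place to be careful is that the envelopes on $\Sigma_n$ and $P_k$ hold almost surely (not merely in expectation), which is what allows them to be taken outside the expectation of the product with the third moment of $X_k$; this in turn relies on the almost-sure assumption \eqref{asmp:almost-sure} used in Theorem~\ref{thm:martingale-clt}.
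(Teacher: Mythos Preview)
Your proposal is correct and follows essentially the same approach as the paper's proof: deduce the almost-sure spectral envelopes $n\alpha I \preceq \Sigma_n \preceq n\beta I$ and $(n-k+1)\alpha I \preceq P_k$ from the assumption on $\Sigma_k$, bound the two factors in \eqref{eq:mart-clt-bound} separately, and finish with $\sum_{k=1}^n 1/\sqrt{n-k+1} < 2\sqrt{n}$. The only cosmetic difference is that the paper writes the $P_k$-factor as $\|P_k^{-\nhalf}\Sigma_n^{\nhalf}\|_2$ rather than $\|\Sigma_n^{\nhalf}P_k^{-1}\Sigma_n^{\nhalf}\|_2^{\nhalf}$, which are equal.
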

The above bounds yields a convergence rate of order
${d^2}/{\sqrt{n}}$ for any smooth test function, where $d$ is the dimension of the parameters and $n$ is the number of samples in the martingale sequence.
We are not aware of a multivariate martingale CLT result to compare the dimension dependence, but
our result improves upon the dependence in \cite{reinert2009multivariate} where
it is of order $d^3$ for the standard multivariate CLT for the independent random vectors.
We emphasize that the dimension dependence of order $d^2$ is a result of
the particular Stein equation being used in the proof.

In Section~\ref{sec:sgd-rates}, we will use Corollary~\ref{cor:clt-psd-cov}
to prove a non-asymptotic normality result for SGD.
When translated to the optimization terminology,
$n$ will denote the number of iterations in a stochastic algorithm,
and $d$ will denote the dimension of the parameters.

\subsection{Relaxing the Assumptions}
\label{sec:relax-cond-variance}
Theorem~\ref{thm:martingale-clt} and the consequent corollary
rely on the assumptions that (i) the eigenvalues of the conditional covariances $\s_k$ are bounded away from 0,
i.e., $\s_k \succeq \alpha I$,
and (ii) the summation of the conditional covariances are deterministic, i.e., $P_1 \aseq \vn$.
Even though these assumptions are satisfied
in the setting of \cite{polyak1992acceleration} for stochastic gradient algorithms,
we discuss relaxations to the assumptions which may be used to extend our results to different settings,
or may be of independent interest.
The following corollary provides a relaxation to the assumption $\s_k \succeq \alpha I$,
at the expense of introducing a stronger upper bound on the third conditional moment.
\begin{corollary}\label{cor:clt-non-psd-cov}
  Instantiate the notation and assumptions of Theorem \ref{thm:martingale-clt}.
  If we further assume that there are constants $\beta$ and $\delta$ such that
  \eq{\label{asmp:stronger-3rd-moment}
    \Ex{\twonorm{X_k}^3 | \F_{k-1}} \leq \beta \vee \delta \Tr(\s_k) \ \text{ almost surely,}
  }
  then, we have
  \eq{\label{eq:mart-clt-non-psd-bound}
    &\abs{\Ex{h(\vn^{-\nhalf} S_n) - \Ex{h(Z)}}}\\
    \nonumber
    &\ \leq
    2\frac{M_1(h)}{\sqrt{n}}\!\Tr(\tfrac{1}{n}\vn)^{\nhalf}
    + \frac{3\pi }{4}\delta\sqrt{d}n
    M_2(h) \|\vn^{-\nhalf}\|^3_2
    \big[ \Tr(\tfrac{1}{n}\vn) + \beta^{2/3} \big ]
    .
  }
\end{corollary}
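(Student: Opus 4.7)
The plan is to reduce Corollary~\ref{cor:clt-non-psd-cov} to Theorem~\ref{thm:martingale-clt} through a Gaussian augmentation of the martingale difference sequence, which regularizes the (possibly degenerate) partial covariances so that the theorem applies; the cost of this regularization is paid in a Lipschitz error involving $M_1(h)$, which accounts for the first summand in \eqref{eq:mart-clt-non-psd-bound}.

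Fix $\tau > 0$ to be chosen later, and let $N_1, \ldots, N_n$ be i.i.d.\ $\Gsn_d(0, I)$, independent of $\{X_k\}$ and $\{\F_k\}$. On the enlarged filtration $\F_k' = \F_k \vee \sigma(N_1,\ldots, N_k)$, define $Y_k = X_k + \tau N_k$. Then $\{Y_k\}$ is a martingale difference sequence with conditional covariances $\tilde\s_k = \s_k + \tau^2 I \succeq \tau^2 I$, partial covariances $\tilde P_k = P_k + (n-k+1)\tau^2 I \succeq (n-k+1)\tau^2 I$ (invertible), and variance $\tilde\vn = \vn + n\tau^2 I$. The almost-sure identity $\tilde P_1 = \tilde\vn$ is inherited from $P_1 = \vn$. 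Setting $\tilde S_n = \sum_{k=1}^n Y_k$, the triangle inequality yields
\begin{align*}
\big|\E[h(\vn^{-\nhalf} S_n) - h(Z)]\big| \leq A + B,
\end{align*}
where $A := |\E[h(\vn^{-\nhalf} S_n) - h(\tilde\vn^{-\nhalf} \tilde S_n)]|$ and $B := |\E[h(\tilde\vn^{-\nhalf} \tilde S_n) - h(Z)]|$.

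I would bound $B$ by applying Theorem~\ref{thm:martingale-clt} to $\{Y_k\}$, using $\|\tilde\vn^{\nhalf} \tilde P_k^{-1} \tilde\vn^{\nhalf}\|_2^{\nhalf} \leq \|\tilde\vn\|_2^{\nhalf}/(\tau\sqrt{n-k+1})$ (from $\tilde P_k \succeq (n-k+1)\tau^2 I$) and the third-moment bound $\E[\|Y_k\|^3] \leq 4\bigl(\E[\|X_k\|^3] + \tau^3 \E[\|N_k\|^3]\bigr)$ together with assumption~\eqref{asmp:stronger-3rd-moment} and the standard Gaussian moment $\E[\|N_k\|^3] \lesssim d^{3/2}$. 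For $A$, I would apply Lipschitz continuity to obtain $A \leq M_1(h)\,\E[\|\vn^{-\nhalf} S_n - \tilde\vn^{-\nhalf} \tilde S_n\|]$, expanding
\begin{align*}
\vn^{-\nhalf} S_n - \tilde\vn^{-\nhalf} \tilde S_n = (\vn^{-\nhalf} - \tilde\vn^{-\nhalf}) S_n - \tau\, \tilde\vn^{-\nhalf}\, \textstyle\sum_{k=1}^n N_k,
\end{align*}
and using $\E[\|S_n\|] \leq \sqrt{\Tr(\vn)}$, $\E[\|\sum_k N_k\|] \leq \sqrt{nd}$, together with a spectral-calculus bound on $\|\vn^{-\nhalf} - \tilde\vn^{-\nhalf}\|_2$ (via, e.g., an integral representation of the matrix inverse square root).

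The $\beta^{2/3}$ factor in \eqref{eq:mart-clt-non-psd-bound} arises by combining assumption~\eqref{asmp:stronger-3rd-moment} with Jensen's inequality: $\Tr(\s_k) = \E[\|X_k\|^2 | \F_{k-1}] \leq \E[\|X_k\|^3|\F_{k-1}]^{2/3} \leq (\beta \vee \delta\Tr(\s_k))^{2/3}$, which forces $\Tr(\s_k) \leq \beta^{2/3} \vee \delta^2$; combined with $\sum_k \Tr(\s_k) = \Tr(\vn)$, this controls $\sum_k \E[\|X_k\|^3]$ by expressions of the form $\delta\Tr(\vn) + \delta n \beta^{2/3}$. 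The main obstacle is precisely the balance between $A$ (which grows linearly in $\tau$ through the Gaussian perturbation) and $B$ (which carries a $1/\tau$ factor from $\tilde P_k^{-1}$): choosing $\tau$ optimally while ensuring the final bound has coefficient $\|\vn^{-\nhalf}\|^3$ rather than $\|\tilde\vn^{-\nhalf}\|^3$ and the clean prefactor $n\bigl[\Tr(\tfrac{1}{n}\vn) + \beta^{2/3}\bigr]$ requires delicate matrix-perturbation estimates and careful tracking of constants throughout.
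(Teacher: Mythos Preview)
Your regularization idea is in the right spirit, but the paper executes it differently and, crucially, uses a device you do not mention.

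\textbf{Different regularization.} The paper does not augment the martingale increments. Instead it re-enters the Lindeberg--Stein telescoping from the proof of Theorem~\ref{thm:martingale-clt} and replaces the Gaussian smoothing $T_k$ by $T_k' = T_k + AZ'$ for one extra independent standard normal $Z'$ and a fixed matrix $A$. This shifts every $P_k$ to $P_k + A\vn^{-1}A^\top$ while leaving $X_k$ and its conditional moments untouched. The Lipschitz term $2M_1(h)\Tr(A\vn^{-1}A^\top)^{1/2}$ is simply the cost of the extra $AZ'$ at the two ends of the telescoping; choosing $A=\vn/\sqrt{n}$ gives the first summand of~\eqref{eq:mart-clt-non-psd-bound} exactly, with no matrix-perturbation analysis of $\vn^{-1/2}$ versus $\tilde\vn^{-1/2}$ and no inflated third moments from injected Gaussian noise. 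Your per-step augmentation $Y_k=X_k+\tau N_k$ forces you to handle both of those complications, which is precisely the ``delicate'' obstacle you identify.

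\textbf{Missing ingredient: the stopping-time partition.} The second summand is obtained via a Bolthausen-type stopping-time argument that your sketch omits. Set $\tau_k=\sup\{m:\bar V_m\preceq\tfrac{k}{n}\vn\}$. For $j\le\tau_k$ one has $P_j+\vn/n\succeq\tfrac{n-k+1}{n}\vn$, so $\|\vn^{1/2}(P_j+\vn/n)^{-1}\vn^{1/2}\|_2^{1/2}\le\sqrt{n/(n-k+1)}$ is constant on the block $(\tau_{k-1},\tau_k]$. Assumption~\eqref{asmp:stronger-3rd-moment} then gives $\sum_{j\in(\tau_{k-1},\tau_k]}\E[\|X_j\|_2^3\mid\F_{j-1}]\le\delta\,\Tr(\bar V_{\tau_k}-\bar V_{\tau_{k-1}})\le\delta\bigl(\tfrac1n\Tr\vn+\beta^{2/3}\bigr)$, and summing over the $n$ blocks produces $\delta\,n\bigl[\tfrac1n\Tr\vn+\beta^{2/3}\bigr]$ with the clean prefactor $\|\vn^{-1/2}\|_2^3$ and no parameter to balance. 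Your route, using only $\tilde P_k\succeq(n-k+1)\tau^2 I$, discards the lower bound $P_j\succeq\tfrac{n-k}{n}\vn$ that the stopping times provide, leaves a $1/\tau$ factor that must be traded against the $\tau$-growth of the Lipschitz error, and cannot arrive at the stated form of~\eqref{eq:mart-clt-non-psd-bound}.
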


Compared to Corollary~\ref{cor:clt-psd-cov},
the above result assumes that the test function is both Lipschitz and smooth.
One should think of $\vn$ as of order $n$ since it is the variance of the summation $S_n$; thus,
the resulting bound \eqref{eq:mart-clt-non-psd-bound} still decays with rate $\sqrt{n}$.
The parameters in the assumption \eqref{asmp:stronger-3rd-moment} will depend on the dimension $d$;
in a typical application, $\beta$ and $\delta$
will be of order $d^{3/2}$ and $\sqrt{d}$, respectively.
Therefore, the first term on the right hand side of \eqref{eq:mart-clt-non-psd-bound}
is of order $\sqrt{d/n}$ and the second term is of order $d^2/\sqrt{n}$; therefore,
the second term will dominate and determine the convergence rate.

The assumption $P_1 \aseq \Sigma$ as given in \eqref{asmp:almost-sure} holds
for the stochastic gradient schemes that we consider in this paper,
but it may not hold for a class of specialized algorithms where
the randomness introduced through the current iteration depends on the previous step; thus,
the conditional covariances are random variables and
their summation is not deterministic.
This may be the case, for example,
when the batch size in SGD is increased over each iteration by
including previously unused samples in the updates \citep{erdogdu2015convergence}.

Relaxing the assumption \eqref{asmp:almost-sure} has been the focus of many papers;
see for example \cite{bolthausen1982exact,mourrat2013rate,rollin2018quantitative}.
Since this assumption is conveniently satisfied for the stochastic algorithms that we consider in the current paper,
we omit a rigorous relaxation of this condition.
However, a classical approach for this task
would rely on a construction first introduced by \cite{bolthausen1982exact},
and also used in \cite{mourrat2013rate,rollin2018quantitative}.
The resulting bound would be identical to the right hand size of \eqref{eq:mart-clt-non-psd-bound}
with an additional error term $\E[\nucnorm{I - \vn^{-1}P_1}]^{\nhalf}$
quantifying the difference between $P_1$ and $\vn$.

\section{Rates to Normality for Stochastic Gradient Descent}
\label{sec:sgd-rates}
As discussed in Section~\ref{sec:intro}, the rates of convergence of the Martingale CLT established in Theorem~\ref{thm:martingale-clt} plays a crucial role in obtaining non-asymptotic results on the convergence to normality of the SGD iterates. We elaborate about this in this section. Specifically, we consider the problem of minimizing a smooth and strongly-convex function $f: \mathbb{R}^d \to \mathbb{R}$ by stochastic gradient descent. Specifically, we make the following assumptions on the function $f(\theta)$.
\begin{assumption}  \label{assume:forsgd}
We assume that the function $f(\theta)$ satisfies, for some constants $L, L_H,\mu > 0$ and any points $\theta, \theta' \in \mathbb{R}^d$:
  \begin{itemize}
 \item Strong convexity: $f(\theta) -f(\theta') -\<\nabla f(\theta'), \theta-\theta'\> \geq \frac{\mu}{2}\|\theta - \theta' \|_2^2$.
 \item Hessian-Smoothness: $\|\nabla^2 f(\theta')-\nabla^2 f(\theta)\|_2 \le L_H \|\theta'-\theta\|_2, \quad \forall~\theta,\theta' \in \mathbb{R}^d.
$
\end{itemize}
\end{assumption}
Recall that we denote the minimizer as $\theta^* = \underset{\theta \in \mathbb{R}^d}{\argmin}~~f(\theta)$ and we consider the following SGD updates along with the Polyak-Ruppert averaging scheme. Starting with $\theta_0 \in \mathbb{R}^d$ we define the following sequence
\begin{align}\label{eq:sgdupdate}
\theta_t = \theta_{t-1} - \eta_t g(\theta_{t}), \qquad \bar{\theta}_t = \frac{1}{t} \sum_{i=0}^{t-1} \theta_i,
\end{align}
where, with $\zeta_{t}$ being a sequence of mean-zero i.i.d random vectors, we have
\begin{align}\label{eq:noisygrad}
g(\theta_{t}) = \nabla f(\theta_{t-1}) + \zeta_{t}.
\end{align}
This covers the popular framework of sub-sampling based stochastic gradient descent and zeroth-order stochastic gradient descent as well. Before proceeding to analyze the general SGD, it is instructive to consider the following linear setting, following the strategy in~\cite{polyak1992acceleration}. As will be seen in Section~\ref{sec:sgdsetting}, the proof of the general setting will largely follow from the proof of the linear setting considered in Section~\ref{sec:linearsetting}. 
\subsection{Linear Problem Setting}\label{sec:linearsetting}
To gain intuition, we first consider the case of solving the system of linear equations of the form $A \theta = b$ where the matrix $A \in \mathbb{R}^{d \times d}$ is assumed to be positive definite. Let $\theta^*$ denote the true solution of this linear system. Following~\cite{polyak1992acceleration}, we consider the following stochastic iterative algorithm for obtaining a solution for the linear system:
\begin{align}
 \theta_t & = \theta_{t-1} - \eta_t y_t, \qquad y_t = A \theta_{t-1}-b+\zeta_t, \label{eq:linaverage} \\
 \bar{\theta}_t &= \frac{1}{t} \sum_{i=0}^{t-1} \theta_i. \nonumber 
\end{align}
Here, with a slight abuse of notation, $\zeta_t$ is a random perturbation to the residual term $A \theta_{t-1}-b$. Furthermore, we denote by the tuple $(\Omega, \mathcal{F}, \mathcal{F}_t, \mathbb{P})$ an increasing sequence of Borel fields and assume that $\zeta_t$ is a martingale difference sequence adapted to the filtration $\mathcal{F}_t$. 
We now proceed to provide the main result of this section. Let $\Delta_t = \theta_t - \theta^*$ and note that we have $\Delta_t = \Delta_{t-1} - \eta_t \left[A \Delta_{t-1} + \zeta_t\right]$. We also denote the average residual by
\begin{align*}
\bar{\Delta}_t &= \frac{1}{t} \sum_{i=0}^{t-1} \Delta_i.
\end{align*}
We now present our non-asymptotic result on the rate of convergence of the vector $\sqrt{t}~\bar{\Delta}_t $ to a normal random vector $Z$. Define,
\begin{align}\label{eq:temp}
  \varrho(\eta,t) \defeq \sum_{j=1}^{t-1} \Bigg\{  e^{ -2c_1 \sum_{i=j}^{t-1} \eta_i     }
  + \Bigg[\frac{C'}{\eta_j^{1-c_2}} \sum_{i=j}^t m_j^t e^{-\lambda m_j^t} (m_j^i -m_j^{i-1})\Bigg]^2 \Bigg\}   
\end{align}
where $m_j^i \defeq \sum_{k=j}^i \eta_k$ and $\eta_j$ denotes the step-size (or learning rate) from~\eqref{eq:linaverage}. Then, we have the following result.

\begin{theorem}\label{thm:linearsetting}
Consider solving the system of linear equations $A\theta = b$ using the iterative updates in~\eqref{eq:linaverage}. Let the matrix $A$ be positive definite, with $0 < c = \lambda_{\min}(A)  \leq \lambda_{\max}(A) =C < \infty$. Furthermore, let the noise $\zeta_t$ satisfy the following assumptions:
\begin{align*}
\E\left[\zeta_t | \mathcal{F}_{t-1}\right] = 0,  \quad \E\left[\| \zeta_t\|_2^2| \mathcal{F}_{t-1}\right] \leq K_d <\infty,
\quad \E\left[\zeta_t \zeta_t^\top|\mathcal{F}_{t-1}\right] \aseq V. 
\end{align*}
Then, for some universal constants $K, K_2, C',c_1 >0$, $c_2 \in (0,1)$, 
a standard normal random vector $Z$, 
and for a twice differentiable function $h : \mathbb{R}^d \to \mathbb{R}$,
we have the following non-asymptotic bound, quantifying the rate of convergence of the iterates,
\begin{align*}
  & \E\left[|h(\sqrt{t}~\bar{\Delta}_t) - h(A^{-1}V^{\nhalf}Z)|\right]
    \leq  \sum_{k=1}^t \E \left[ \frac{ 1.18 \sqrt{d} M_2(h)~
    \big\|  \left[A^{-1} V A^{-1}\right]^{-\nhalf}\zeta_{k}\big\|_2^3}{t~\sqrt{(t-(k-1))}} \right] \\
& \; + \frac{M_1(h)}{\sqrt{t}}\left[\frac{K_2\left[ \E\|\Delta_0\|_2\right]}{\eta_0} + \sqrt{ K_d~K \varrho(\eta,t)  }\right]+ \frac{M_2(h)}{t}\left[\frac{K_2^2\left[ \E\|\Delta_0\|_2^2\right]}{\eta_0^2} + K_d~K \varrho(\eta,t)\right].
\end{align*}

\end{theorem}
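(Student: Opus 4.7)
The plan is to mirror the classical Polyak--Ruppert decomposition: split $\sqrt{t}\bar{\Delta}_t$ into a principal martingale term that carries the limiting covariance, plus a lower-order remainder; handle the martingale by Theorem~\ref{thm:martingale-clt}, and control the remainder via a second-order Taylor expansion backed by a second-moment bound on the SGD iterates.

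First, I would rewrite the recursion $\Delta_i-\Delta_{i-1}=-\eta_i(A\Delta_{i-1}+\zeta_i)$ as $A\Delta_{i-1} = -(\Delta_i-\Delta_{i-1})/\eta_i - \zeta_i$, sum over $i=1,\ldots,t$, and left-multiply by $A^{-1}/\sqrt{t}$ to obtain $\sqrt{t}\bar{\Delta}_t = M_t + R_t$, where $M_t \defeq -\tfrac{1}{\sqrt{t}}A^{-1}\sum_{i=1}^t \zeta_i$ and $R_t \defeq \tfrac{1}{\sqrt{t}}A^{-1}\sum_{i=1}^t (\Delta_{i-1}-\Delta_i)/\eta_i$. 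Since $\{-A^{-1}\zeta_k\}$ is a martingale-difference sequence adapted to $\F_k$ with conditional covariance $A^{-1}VA^{-1}$ almost surely (by $\E[\zeta_k\zeta_k^\top \mid \F_{k-1}]\aseq V$), its partial covariances $P_k = (t-k+1)A^{-1}VA^{-1}$ and total variance $\Sigma_t = tA^{-1}VA^{-1}$ are deterministic and match the law of the target $A^{-1}V^{\nhalf}Z$; in particular, $\|\Sigma_t^{\nhalf}P_k^{-1}\Sigma_t^{\nhalf}\|_2^{\nhalf} = \sqrt{t/(t-k+1)}$. Instantiating Theorem~\ref{thm:martingale-clt} with a linear change of test function that rescales the CLT normalization $\Sigma_t^{-\nhalf}S_t$ back to the scale of $A^{-1}V^{\nhalf}Z$ produces the first summation in the stated bound, with the constant $1.18\approx 3\pi/8$ inherited from Theorem~\ref{thm:martingale-clt} and the denominator $t\sqrt{t-(k-1)}$ coming from $\|\Sigma_t^{-\nhalf}X_k\|_2^3$ together with the partial-variance term.

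Next, a second-order Taylor expansion of $h$ at $M_t$ gives pointwise $|h(M_t+R_t)-h(M_t)|\leq M_1(h)\|R_t\| + \tfrac{1}{2}M_2(h)\|R_t\|^2$. Taking expectations and using Cauchy--Schwarz $\E\|R_t\| \leq (\E\|R_t\|^2)^{\nhalf}$, the last two summands of the theorem will fall out once I establish
\[ \E\|R_t\|^2 \;\leq\; \tfrac{1}{t}\bigl[K_2^2\,\E\|\Delta_0\|^2/\eta_0^2 + K_d K\,\varrho(\eta,t)\bigr]. \]
To prove this, I would unroll the recursion into $\Delta_i = \Pi_0^i\Delta_0 - \sum_{k\leq i}\eta_k\,\Pi_k^i\,\zeta_k$ with $\Pi_k^i\defeq\prod_{j=k+1}^i(I-\eta_j A)$, then Abel-sum inside $R_t$ so that it becomes an explicit linear combination of $\Delta_0$ and the $\zeta_j$'s. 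Invoking the spectral contraction $\|\Pi_k^i\|_2 \leq e^{-c_1 m_k^i}$ (a consequence of $A\succeq cI$ for small enough step sizes) together with the martingale orthogonality of the $\zeta_j$'s, each noise contribution matches a summand in $\varrho(\eta,t)$, while the $\Delta_0$ contribution yields the $\E\|\Delta_0\|^2/\eta_0^2$ factor.

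I expect the $\E\|R_t\|^2$ bound to be the main obstacle. Abel summation on $R_t$ leaves a boundary term of the form $\Delta_t/\eta_t$, so one must also control $\E\|\Delta_t\|^2$ separately by iterating $\E[\|\Delta_t\|^2\mid\F_{t-1}] \leq (1-c\eta_t)^2\|\Delta_{t-1}\|^2 + \eta_t^2 K_d$, a direct consequence of the SGD update and the noise variance assumption. On top of that, propagating each noise $\zeta_j$ through the weighted telescoping sum with time-varying step sizes, and landing on the exact two-part form of $\varrho(\eta,t)$ with explicit non-asymptotic constants, will require careful bookkeeping; the classical Polyak--Ruppert analysis is essentially asymptotic at this step, so the novelty here lies in extracting those constants explicitly.
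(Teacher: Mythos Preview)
Your plan is correct and coincides with the paper's proof: both split $\sqrt{t}\,\bar\Delta_t$ into the martingale piece $t^{-1/2}A^{-1}\sum_j\zeta_j$ plus a remainder, apply Theorem~\ref{thm:martingale-clt} to the former (with $P_k=(t-k+1)A^{-1}VA^{-1}$ and $\Sigma_t=tA^{-1}VA^{-1}$ giving exactly the factor $t^{-1}(t-(k-1))^{-1/2}$), and control the latter via a second-order Taylor expansion together with first- and second-moment bounds. The only cosmetic difference is that the paper quotes Lemma~2 of \cite{polyak1992acceleration} to obtain the remainder already split as $I_1+I_3=\tfrac{1}{\sqrt{t}\,\eta_0}B_t\Delta_0+\tfrac{1}{\sqrt t}\sum_j W_j^t\zeta_j$ and then lifts the estimates $\|B_t\|_2\le K_2$, $\|T_j^t\|_2\le Ke^{-c_1 m_j^{t-1}}$, $\|S_j^t\|_2\le C'\eta_j^{-(1-c_2)}\sum_i m_j^t e^{-\lambda m_j^t}(m_j^i-m_j^{i-1})$ directly from \cite{polyak1992acceleration} to land on $\varrho(\eta,t)$, whereas you propose to recover the same decomposition by Abel-summing $R_t$ and unrolling $\Delta_i$ from scratch---equivalent algebra, just not pre-packaged.
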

The above bound quantifies the non-asymptotic rate of convergence of $\sqrt{t}~\bar{\Delta}_t $ to normality, for any given matrix $A, V$. The rate is presented in a way so that it explicitly quantifies the dependence on the learning rate $\eta_j$ and it holds for any $t \geq 1$. The current bound depends on the dimension $d$ through the following terms: the factor $\sqrt{d}$ in the first term; the spectrum of $A^{-1}VA^{-1}$; and expected norms $\E[\| \zeta_k\|_2]$ (through $K_d$) and $\E\left[\| \zeta_k\|_2^3\right]$. Under conditions similar to Corollary~\ref{cor:clt-psd-cov}, we now provide another corollary in order to gain some intuition on the dependence of the bound on the dimensionality $d$. The proof of the corollary below follows from Corollary~\ref{cor:clt-psd-cov} and the proof of Theorem 1 in~\cite{polyak1992acceleration}.

\begin{corollary}\label{corr:lineareg}
  Instantiate the notation and assumptions of Theorem \ref{thm:linearsetting}.
Assume that $A$ and $V$ are such that $\alpha I \preceq \left[A^{-1}V A^{-1} \right] \preceq \beta I$ and $\E\big[\twonorm{\zeta_k}^3\big]\leq \gamma d^{3/2}$. Furthermore, assume that $\eta_t = \eta t^{-c_3}$ for some $c_3 \in (0,1)$. Then, we have $K_d \leq d \beta$ and 
\begin{align*}
\E\left[|h(\sqrt{t}~\bar{\Delta}_t) - h(A^{-1}V^{\nhalf} Z)|\right] \leq \frac{2.36~\gamma\sqrt{\beta}}{\alpha^{2}}
    M_2(h)\frac{d^2}{ \sqrt{t}} + K_4~M_1(h)~\sqrt{\frac{d}{t}} + K_5~M_2(h)~\frac{d}{t},
\end{align*}
where $K_4$ and $K_5$ are constants (that only depend on $c_2, c_3, \eta, \beta, K,C', K_2$ and $\E\left[ \| \Delta_0\|_2^2\right]$) that are independent of $d$ and $t$.
\end{corollary}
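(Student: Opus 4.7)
The plan is to instantiate the three-term bound of Theorem~\ref{thm:linearsetting} under the corollary's specialized assumptions and simplify each term. The first term (the martingale CLT contribution through the sum $I_2$) will yield the dominant $d^2/\sqrt{t}$ rate; the second and third terms (from the initialization bias $I_1$ and the preconditioner error $I_3$) will contribute the lower-order $\sqrt{d/t}$ and $d/t$ corrections respectively.

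For the first term, the two-sided spectral bound $\alpha I \preceq A^{-1}VA^{-1} \preceq \beta I$ gives $(A^{-1}VA^{-1})^{-1} = A V^{-1} A \preceq \alpha^{-1}I$, so
\[
\bigl\|(A^{-1}VA^{-1})^{-1/2}\zeta_k\bigr\|_2^3 = \bigl(\zeta_k^\T A V^{-1} A \,\zeta_k\bigr)^{3/2} \leq \alpha^{-3/2}\|\zeta_k\|_2^3.
\]
Taking expectations with $\E[\|\zeta_k\|_2^3] \leq \gamma d^{3/2}$ and using the elementary estimate $\sum_{k=1}^t (t-k+1)^{-1/2} \leq 2\sqrt{t}$, the first term of Theorem~\ref{thm:linearsetting} evaluates to $\tfrac{2.36\,\gamma}{\alpha^{3/2}}\,\tfrac{M_2(h)\, d^2}{\sqrt{t}}$. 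The loose bound $\alpha^{-3/2} \leq \sqrt{\beta}/\alpha^2$, valid because $\alpha \leq \beta$, then yields the form stated in the corollary.

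For the second and third terms, I would first bound $K_d$: since $\E[\|\zeta_k\|_2^2 \mid \mathcal{F}_{k-1}] = \Tr(V)$, and since $A^{-1}VA^{-1} \preceq \beta I$ implies $V \preceq \beta A^2 \preceq \beta C^2 I$, we get $K_d \leq d\beta C^2$; the factor $C^2$ is absorbed into the final constants $K_4, K_5$. Next, I would bound $\varrho(\eta, t)$ under the polynomial schedule $\eta_t = \eta t^{-c_3}$. The first sum $\sum_j e^{-2c_1 \sum_{i=j}^{t-1}\eta_i}$ is controlled via the approximation $\sum_{i=j}^{t-1}\eta_i \sim \tfrac{\eta}{1-c_3}\bigl(t^{1-c_3} - j^{1-c_3}\bigr)$; the second sum, involving $m_j^t e^{-\lambda m_j^t}$ and the factor $\eta_j^{c_2-1}$, is controlled by the technical estimate on page 846 of~\cite{polyak1992acceleration}. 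These together yield a bound on $\varrho(\eta,t)$ in terms of $c_2, c_3, \eta, \beta, K, C'$ that is independent of $d$ and $t$. Substituting $K_d = O(d)$ and $\varrho(\eta,t) = O(1)$ back into Theorem~\ref{thm:linearsetting}, the second term simplifies to $O(M_1(h)\sqrt{d/t})$ and the third to $O(M_2(h)\,d/t)$.

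The principal technical obstacle is the bound on $\varrho(\eta, t)$, especially the second sum, whose analysis requires a delicate interplay between $\eta_j^{c_2-1}$, the partial-sum quantities $m_j^i$, and the exponential factor $e^{-\lambda m_j^t}$. Once this is in hand, assembling the final inequality is routine, and the constants $K_4$ and $K_5$ are read off as aggregations of $c_2, c_3, \eta, \beta, K, C', K_2$, and $\E[\|\Delta_0\|_2^2]$, independent of both $d$ and $t$.
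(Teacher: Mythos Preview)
Your proposal is correct and follows essentially the same approach the paper indicates: the paper's own proof is a single sentence pointing to Corollary~\ref{cor:clt-psd-cov} for the martingale-CLT term and to the proof of Theorem~1 in \cite{polyak1992acceleration} for bounding the remaining $I_1,I_3$ contributions, and you carry out exactly that program with more detail. Your observation that the spectral assumption yields $K_d \leq d\beta C^2$ rather than the paper's stated $K_d \leq d\beta$ is in fact more careful than the paper; the extra factor $C^2$ is harmless once absorbed into $K_4,K_5$, though strictly speaking $C$ is then another constant on which $K_4,K_5$ depend.
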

We emphasize that our goal through Corollary~\ref{corr:lineareg} is to demonstrate mainly the dependence of the rate of averaged SGD to normality, on $d$ and $t$. Hence, careful attention is not paid to get the exact constants $K_4$ and $K_5$. Note that, under the setting assumed in Corollary~\ref{corr:lineareg}, the dominant term is the first term which is of the order $ \mathcal{O}(d^2/\sqrt{t})$. It would be interesting to explore different assumptions on the spectrum of $A$ and $V$ which might help reduce the dimension dependence. Finally,  the assumption on the third-moment of $\zeta_k$ is an additional assumption which is not required in the asymptotic results proved in~\cite{polyak1992acceleration}. Indeed this is reminiscent of the Berry-Essen type bounds required for rate of CLT with i.i.d data. Finally, note that one could also use the assumptions and result in Corollary~\ref{cor:clt-non-psd-cov} to relax the assumption made in Corollary~\ref{corr:lineareg} further and obtain a corresponding result. 

 \subsection{Stochastic Gradient Setting}\label{sec:sgdsetting}
 We now consider the stochastic gradient updates given in \eqref{eq:sgdupdate} and~\eqref{eq:noisygrad} for the optimization problem~\eqref{eq:main_prob}. The result of Theorem~\ref{thm:sgdsetting} below is analogous to that of Theorem~\ref{thm:linearsetting} for the linear setting.
 
 \begin{theorem}\label{thm:sgdsetting}
Consider optimizing~\eqref{eq:main_prob} using the iterative updates in~\eqref{eq:sgdupdate}. Let the function $f$ satisfy Assumption~\ref{assume:forsgd}. Then, for some universal constants $K, K_2, C',c_1 >0$, $c_2 \in (0,1)$, 
and for a twice differentiable function $h : \mathbb{R}^d \to \mathbb{R}$,
we have the following non-asymptotic bound, quantifying the rate of convergence of the iterates to a standard normal random vector $Z$, 
\begin{align}\label{eq:sgd-setting-bound}
  & \E\left[|h(\Sigma_t^{-\nhalf}~\bar{\Delta}_t) - h(Z)|\right]\\
  \nonumber
    &\leq  \frac{3\pi}{8t} \sqrt{d} M_2(h)
    \sum_{k=1}^t \E\left[ \big\|\Sigma_t^{\nhalf}  P_k^{-1} \Sigma_t^{\nhalf} \big\|^{\nhalf}_2
      \big\| \left([\nabla^2 f(\theta^*)]^{-1}\Sigma_t[\nabla^2 f(\theta^*)]^{-1} \right)^{-\nhalf} X_k \big\|_2^3 \right]\\
  \nonumber
  & \quad + \frac{M_1(h)\|\Sigma_t^{-\nhalf} \|_2}{t} \left[\frac{K_2\left[ \E\|\Delta_0\|_2\right]}{\eta_0} +\frac{KL_H \sum_{j=1}^{t-1} \sqrt{\eta_j}}{\sqrt{2\mu}} +\sqrt{K_d~K \varrho(\eta,t)}  \right]\\
  \nonumber
& \quad + \frac{3M_2(h)\|\Sigma_t^{-\nhalf} \|_2^2}{2t^2} \left[\frac{K_2^2\left[ \E\|\Delta_0\|_2^2\right]}{\eta_0^2} + \frac{K^2 L_H^2 \sum_{j=1}^{t-1} \eta_j}{2\mu}+  K_d~K \varrho(\eta,t)\right],
\end{align}
where $\varrho(\eta,t)$ is defined in~\eqref{eq:temp}, $X_k \defeq \left[ \E\left[\nabla f(\theta_{k-1})\right]  - \nabla f(\theta_{k-1})- \zeta_k\right]$ and $\Sigma_t \defeq \sum_{k=1}^t V_k$ where $V_k$ corresponds to the covariance matrix of $X_k$:
\begin{align*}
V_k \aseq \E \left[X_k X_k^\top|\mathcal{F}_{k-1}  \right], \qquad \forall k \in [t].
\end{align*}
\end{theorem}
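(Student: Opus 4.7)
The plan is to follow the template of Theorem~\ref{thm:linearsetting}: reduce the SGD recursion to a perturbation of the linear recursion~\eqref{eq:linaverage}, decompose the averaged iterate via the Polyak-Ruppert identity, and propagate the analysis through the multivariate martingale CLT. Concretely, setting $H = \nabla^2 f(\theta^*)$, using $\nabla f(\theta^*) = 0$ together with Hessian-Lipschitzness, I Taylor-expand to write $\nabla f(\theta_{t-1}) = H\Delta_{t-1} + R_t$ with $\|R_t\|_2 \leq (L_H/2)\|\Delta_{t-1}\|_2^2$, so that~\eqref{eq:sgdupdate}--\eqref{eq:noisygrad} becomes
\begin{equation*}
\Delta_t = [I - \eta_t H]\Delta_{t-1} - \eta_t(R_t + \zeta_t),
\end{equation*}
which is formally identical to~\eqref{eq:linaverage} with $A$ replaced by $H$ and the noise augmented by the $\mathcal{F}_{t-1}$-measurable term $R_t$. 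Applying Lemma 2 of~\cite{polyak1992acceleration} to this recursion then yields the five-term decomposition
\begin{equation*}
\bar\Delta_t = I_1 + I_2 + I_3 + J_2 + J_3,
\end{equation*}
where $I_1,I_2,I_3$ are the initial-condition, stationary-noise, and step-size-error terms from the linear proof (with $A$ replaced by $H$), and $J_2 = -t^{-1}\sum_j H^{-1} R_j$, $J_3 = -t^{-1}\sum_j W_j^t R_j$ are the new nonlinear-remainder terms.

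Next, by the triangle inequality I split
\begin{equation*}
\mathbb{E}\bigl|h(\Sigma_t^{-1/2}\bar\Delta_t) - h(Z)\bigr| \leq \mathbb{E}\bigl|h(\Sigma_t^{-1/2}I_2) - h(Z)\bigr| + \mathbb{E}\bigl|h(\Sigma_t^{-1/2}\bar\Delta_t) - h(\Sigma_t^{-1/2}I_2)\bigr|.
\end{equation*}
The first term is a multivariate martingale CLT problem handled directly by Theorem~\ref{thm:martingale-clt} applied to the sequence $X_k$ driving $I_2$ (with conditional covariance $V_k$ and partial sums $P_k = \sum_{i=k}^{t}V_i$ as in the statement); this produces the leading $(3\pi/8t)\sqrt{d}\,M_2(h)\sum_k\mathbb{E}[\cdots]$ term of~\eqref{eq:sgd-setting-bound}. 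For the second term, a second-order Taylor expansion of $h$ around $\Sigma_t^{-1/2}I_2$, exactly as in the linear proof, yields contributions bounded by $M_1(h)\|\Sigma_t^{-1/2}\|_2\,\mathbb{E}\|I_1+I_3+J_2+J_3\|_2$ and $(3/2)M_2(h)\|\Sigma_t^{-1/2}\|_2^2\,\mathbb{E}\|I_1+I_3+J_2+J_3\|_2^2$; applying the triangle inequality to these further reduces the task to estimating the expected (squared) norms of each of the four components separately.

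The $I_1$ and $I_3$ bounds carry over verbatim from Theorem~\ref{thm:linearsetting} via $\|B_t\|_2 \leq K_2$ and the estimate $t^{-1}\sum_j\|W_j^t\|_2^2 \leq K\varrho(\eta,t)$ from Lemma 1 of~\cite{polyak1992acceleration}, producing the $K_2\,\mathbb{E}\|\Delta_0\|_2/\eta_0$ and $\sqrt{K_d K\varrho(\eta,t)}$ pieces. For the new $J_2, J_3$ terms I plan to combine the pointwise estimate $\|R_j\|_2 \leq (L_H/2)\|\Delta_{j-1}\|_2^2$ with $\|H^{-1}\|_2, \|W_j^t\|_2 \leq K$ and the standard strong-convexity SGD moment bound $\mathbb{E}\|\Delta_{j-1}\|_2^2 \lesssim \eta_j/\mu$, obtained by iterating the one-step inequality $\mathbb{E}[\|\Delta_j\|_2^2 \mid \mathcal{F}_{j-1}] \leq (1 - 2\mu\eta_j)\|\Delta_{j-1}\|_2^2 + \eta_j^2 K_d$; this yields $\mathbb{E}\|J_2 + J_3\|_2 \lesssim KL_H \sum_j\sqrt{\eta_j}/\sqrt{2\mu}$ and the analogous squared-norm bound, matching the $L_H$-scaled error terms appearing in~\eqref{eq:sgd-setting-bound}.

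The hard part will be the correct identification and control of the martingale difference $X_k = \mathbb{E}[\nabla f(\theta_{k-1})] - \nabla f(\theta_{k-1}) - \zeta_k$: because the Taylor remainder $R_j$ is $\mathcal{F}_{j-1}$-measurable but not mean-zero, one has to isolate its genuine martingale component from a predictable residual so that Theorem~\ref{thm:martingale-clt} can be applied cleanly and the conditional covariance $V_k$ and partial sums $P_k$ ultimately take the form appearing in the statement. The $\varrho(\eta,t)$ dependence itself carries through unchanged from the linear case, since the matrices $W_j^t$ depend only on the deterministic Hessian $H$ and not on the stochastic trajectory.
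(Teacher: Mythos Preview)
Your proposal is correct and follows essentially the same route as the paper. The paper combines your $J_2$ and $J_3$ into a single term $I_4 = t^{-1}\Sigma_t^{-1/2}\sum_j [H^{-1}+W_j^t]\bigl(\bar R(\Delta_j)-H\Delta_j\bigr)$, bounds it exactly as you describe (via Hessian--Lipschitzness together with the SGD moment bound $\E\|\Delta_j\|_2^2 \leq (2C/\mu)\eta_j$, for which it simply cites Lemma~A.3 of \cite{su2018statistical} rather than re-deriving the one-step recursion), and otherwise mirrors your triangle-inequality / second-order Taylor / martingale-CLT structure step for step; your ``hard part'' concern about cleanly identifying the martingale component of $X_k$ is a point the paper simply asserts rather than argues.
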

We make several remarks about Theorem~\ref{thm:sgdsetting} and the bound \eqref{eq:sgd-setting-bound}. Notice the following difference between the Theorem~\ref{thm:linearsetting} for the linear setting and Theorem~\ref{thm:sgdsetting}. First note that the scaling of $\bar{\Delta}_t$ for Theorem~\ref{thm:linearsetting} is simplified and reads as $\sqrt{t}$ because of our assumption on the covariance matrix of the martingale. In Theorem~\ref{thm:sgdsetting}, the scaling is by $\Sigma_t^{-\nhalf}$. This leads to the rates controlled by $\| \Sigma_t^{-\nhalf}\|_2/t$ and $\| \Sigma_t^{-\nhalf}\|_2^2/t^2$ for the second and third term respectively. Next note that in~\cite{polyak1992acceleration}, a further assumption is made that the martingale $X_t$ could be decomposed as a summation of two term $X_t = X_t^0 + X_t^1$ such that 
 \begin{align*}
\E\left[ X_t^0  [X_t^0]^\top | \mathcal{F}_{t-1}\right]  \aseq \E \left[ \zeta_1 \zeta_1^\top \right] \defeq V.
 \end{align*}
 Furthermore, assumptions are made on the covariance matrix of the $X_t^1$ and the cross covariance matrix between $X_t^0$ and $X_t^1$ so that the covariance of $X_t$ is asymptotically almost surely equal to the covariance of $X_t^0$. In Theorem~\ref{thm:sgdsetting}, one could potentially make similar non-asymptotic  assumptions to decompose the first term in the R.H.S of the upper bound as sum of two terms that mimic the asymptotic covariance from~\cite{polyak1992acceleration}.  

\section{Discussion}
\label{sec:conclusion}
In this paper, we establish non-asymptotic convergence rates to normality of the Polyak-Ruppert averaged SGD algorithm. A main ingredient for establishing such a result is our result on convergence rates of certain martingale CLTs, which might be of independent interest. Our results have interesting consequences for confidence intervals and hypothesis tests based on SGD algorithm, that are justifiable in a non-asymptotic sense. A straightforward approach would be to leverage the bootstrap style algorithms proposed in~\cite{fang2018online} or~\cite{su2018statistical}, and apply our non-asymptotic martingale CLT result. Indeed a detailed look at the proof in the above papers, reveal that their proof strategy is very similar to the analysis of~\cite{polyak1992acceleration}, and the only probabilistic result used is the asymptotic martingale CLT result. Their results can be adapted to the non-asymptotic regime using our non-asymptotic martingale CLT result in Theorem~\ref{thm:martingale-clt}.

Furthermore, Berry-Esseen style results could be obtained using our  Theorem~\ref{thm:martingale-clt}, for the case of convex confidence sets following the idea of ``differentiable functions approximating indicator functions of convex sets" as done in~\cite{bentkus2003dependence, bentkus2005lyapunov}. Such a method is now standard for the i.i.d case. It is also worth emphasizing that the geometry of the confidence set plays a crucial role for general test functions. This is an active area of research, even for the i.i.d. setting; see, for example,~\cite{chernozhukov2017central}. Our results in this paper form an important step toward this in direction for the martingale case (and hence SGD inference).  

There are several extensions possible for future work. First, our current results for martingale CLT are established for the class of twice-differentiable functions. It would be interesting to extend such results for other metrics, for example, Wasserstein or Kolmogorov distance. Next, it would be interesting to establish non-asymptotic rates of convergence to normality for non-smooth optimization problems and other variants of SGD algorithm.
\bibliographystyle{amsalpha}
\bibliography{./bib}
\appendix
%

\section{Proofs of the Martingale CLT Results}
\label{sec:proof-clt}

The proof of Theorem~\ref{thm:martingale-clt} relies on a combination of Stein's method and Lindeberg's telescopic sum argument as first outlined by \cite{rollin2018quantitative}
for real valued martingales. 
We start with the following lemma which is standard in the literature of Stein's method,
and follows from similar steps already carried out in \cite{goldstein1996multivariate,raivc2004multivariate,gaunt2016rates}.
\begin{lemma} \label{lem:stein-factor-bound}
  Let $h : \reals^d \to \reals$ be twice differentiable and $Z\sim \Gsn_d(0, I)$.
  For $\mu \in \reals^d$ and $\Sigma \in \reals^{d \times d}$ symmetric and positive definite,
  there is a solution $f : \reals^d \to \reals$ to the Stein equation
  \eq{\label{eq:stein-eq}
    \inner{\Sigma,\Hess f(x)} - \inner{x-\mu, \grad f(x)}
    =h(x) - \E[h(\Sigma^{\nhalf}Z+\mu)],
  }
  where we have
  \eq{
    M_3(f) 
    \leq  M_2(h) \frac{\pi}{4}\sqrt{d}
    \|\Sigma^{-\nhalf}\|_2.
  }
\end{lemma}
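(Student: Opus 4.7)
The plan is to construct $f$ through the Ornstein--Uhlenbeck semigroup representation and then bound $M_3(f)$ by differentiating under the integral and performing one Gaussian integration by parts to transfer a derivative from $h$ onto the underlying Gaussian noise. Concretely, I would define the Mehler semigroup
\[
\tilde P_t h(x) \defeq \E\bigl[h\bigl(e^{-t}(x-\mu) + \sqrt{1-e^{-2t}}\,\Sigma^{\nhalf}Z + \mu\bigr)\bigr], \qquad t\geq 0,
\]
whose infinitesimal generator coincides with the left-hand side of \eqref{eq:stein-eq}. The natural candidate solution is then
\[
f(x) \defeq -\int_0^\infty \bigl(\tilde P_t h(x) - \E[h(\Sigma^{\nhalf}Z+\mu)]\bigr)\,dt,
\]
and verifying the Stein identity is a standard consequence of $\partial_t \tilde P_t h = \mathcal{L}\tilde P_t h$ together with exponential ergodicity of the OU flow toward $\Gsn_d(\mu,\Sigma)$.

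For the bound on $M_3(f) = \sup_x \|\nabla^3 f(x)\|_2$, I would differentiate under the integral, reducing the problem to a uniform-in-$x$ bound on $\|\nabla^3 \tilde P_t h(x)\|_2$. Writing $r = e^{-t}$, $A_t = \sqrt{1-r^2}\,\Sigma^{\nhalf}$, and $W = r(x-\mu) + A_t Z + \mu$, the chain rule gives $\nabla^2 \tilde P_t h(x) = r^2\,\E[\nabla^2 h(W)]$. The crux is producing the third derivative without assuming $\nabla^3 h$: for fixed unit directions $u,v$, the scalar map $z \mapsto u^\top \nabla^2 h(r(x-\mu)+A_t z+\mu)\,v$ is differentiable with $Z$-gradient equal to $A_t^\top (\nabla^3 h(W))[u,v,\cdot]$, and applying Stein's identity $\E[Z_l\psi(Z)]=\E[\partial_{Z_l}\psi(Z)]$ coordinatewise yields
\[
(\nabla^3 \tilde P_t h(x))[u,v,w] = r^3\,\E\bigl[\,(A_t^{-1} w)^\top Z \cdot u^\top \nabla^2 h(W)\, v\bigr].
\]
Cauchy--Schwarz, together with $|u^\top \nabla^2 h(W)v|\leq M_2(h)$ for unit $u,v$, $\|A_t^{-1}\|_2 \leq (1-r^2)^{-\nhalf}\|\Sigma^{-\nhalf}\|_2$, and $\E\twonorm{Z}\leq \sqrt{\E\twonorm{Z}^2} = \sqrt{d}$, gives the pointwise bound
\[
\|\nabla^3 \tilde P_t h(x)\|_2 \;\leq\; \frac{e^{-3t}}{\sqrt{1-e^{-2t}}}\,\sqrt{d}\,M_2(h)\,\|\Sigma^{-\nhalf}\|_2.
\]

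To finish, I would integrate this bound in $t$ and recognize the resulting integral as a beta function: the substitution $u = e^{-2t}$ yields $\int_0^\infty e^{-3t}(1-e^{-2t})^{-\nhalf}\,dt = \tfrac{1}{2}B(\tfrac{3}{2},\tfrac{1}{2}) = \pi/4$, which delivers precisely the claimed $M_3(f) \leq \tfrac{\pi}{4}\sqrt{d}\,M_2(h)\,\|\Sigma^{-\nhalf}\|_2$. The main technical obstacle I anticipate is not conceptual but bookkeeping: justifying the exchange of $\nabla^3$ and $\int_0^\infty dt$ when the integrand carries an integrable $1/\sqrt{2t}$ singularity at $t = 0^+$, and making the Gaussian integration by parts rigorous under only $M_2(h) < \infty$ (so that $\nabla^2 h$ exists only Lebesgue-a.e.\ and is merely bounded). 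Both issues are handled by a standard dominated-convergence argument combined with, if needed, a mollification of $h$, essentially as in \cite{goldstein1996multivariate,raivc2004multivariate,gaunt2016rates}.
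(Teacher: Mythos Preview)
Your proposal is correct and follows essentially the same route as the paper: the Ornstein--Uhlenbeck/Mehler representation of $f$, one Gaussian integration by parts to shift a single derivative onto the noise, the bound $\E\twonorm{Z}\leq\sqrt{d}$, and the beta-integral evaluation $\int_0^\infty e^{-3t}(1-e^{-2t})^{-1/2}\,dt=\pi/4$. The only cosmetic difference is ordering: the paper performs the IBP on the \emph{first} directional derivative and then bounds the Lipschitz difference quotient of $\nabla^2 f$ using $M_2(h)$, whereas you differentiate twice first and apply IBP to produce $\nabla^3 f$ explicitly; both yield the identical pointwise estimate and constant.
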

The bound $M_i(f)$ on the $i$-th order derivative of $f$ are termed as the $i$-th Stein factor. 
The main important property of Lemma~\ref{lem:stein-factor-bound} is that
it connects the third Stein factor to the smoothness of the test function $h$.
In the one-dimensional case, one can obtain a bound on third Stein factor in terms of
the Lipschitz constant
of the test function \cite{stein1986approximate,rollin2018quantitative}
without any smoothness assumptions;
however, at least polynomial smoothness is required
in the higher dimensions \cite{gorham2016measuring,erdogdu2018nonconvex}.
The proof for the above result is given below for reader's convenience.\\
%

\begin{proof}[Proof of Lemma~\ref{lem:stein-factor-bound}]
  For a twice differentiable test function $\th : \reals^d \to \reals$,
  and choosing $\mu=0$ and $\Sigma=I$,
  the Stein equation \eqref{eq:stein-eq} reduces to
  \eq{\label{eq:stein-eq-reduced}
    \Tr(\Hess \tf(x)) - \inner{x, \grad \tf(x)}
    =\th(x) - \E[\th(Z)].
  }
  It is well known that the function $\tf$ that solves the Stein equation
  is given by
  \eq{
    \tf(x) = \int_0^\infty \E[\th(Z)] - (P_t\th)(x)  \dt,
  }
  where
  $(P_t\th)(x) = \Ex{\th(e^{-t} x + \sqrt{1 - e^{-2t}}Z)}$ and $(P_t)_{t\geq 0}$
  is the semigroup associated with the Ornstein-Uhlenbeck diffusion \cite{barbour1990stein}.
  One can easily verify that \eqref{eq:stein-eq} is recovered by the following
  change of variables
  \eq{
    h(x) = \th(\Sigma^{-\nhalf}(x-\mu))\ \text{ and }\ f(x) = \tf(\Sigma^{-\nhalf}(x-\mu)),
  }
  where $f(x)$ is given by
  \eq{
    f(x) =& \int_0^\infty \E[\th(Z)] - (P_t\th)(\Sigma^{-\nhalf}(x-\mu)) \dt,\\
    =& \int_0^\infty \E[h(\Sigma^{\nhalf}Z+\mu)] -
    \Ex{h(e^{-t} (x-\mu) + \sqrt{1 - e^{-2t}}\Sigma^{\nhalf}Z + \mu)} {\mathrm d}t.
  }
  Denoting the $d$-variate istoropic Gaussian density with $\phi_d(z)$,
  and following a similar approach with
  \cite{raivc2004multivariate,goldstein1996multivariate},
  we take a derivative in the unit direction $v\in\reals^d$ (i.e. $\twonorm{v}=1$),
  and apply integration by parts to obtain
  \eq{
    \inner{\grad f(x),v}=& -\int_0^\infty e^{-t}
    \Ex{\inner{\grad h(e^{-t} (x-\mu) + \sqrt{1 - e^{-2t}}\Sigma^{\nhalf}Z + \mu),v}} {\mathrm d}t,\\
    \nonumber
    =& -\int_0^\infty \frac{e^{-t}}{\sqrt{1 - e^{-2t}}} \int
    h(e^{-t} (x-\mu) + \sqrt{1 - e^{-2t}}\Sigma^{\nhalf}z + \mu)
    \inner{\grad \phi_d(z), \Sigma^{-\nhalf}v}{\mathrm d}z {\mathrm d}t.
  }

  Taking one more derivative in the unit direction of $w \in \reals^d$ yields
  \eqn{
    \inner{\Hess f(x)v,w}
    =& -\int_0^\infty \frac{e^{-2t}}{\sqrt{1 - e^{-2t}}} \int
    \inner{\grad h(e^{-t} (x-\mu) + \sqrt{1 - e^{-2t}}\Sigma^{\nhalf}z + \mu),w}
    \inner{\grad \phi_d(z), \Sigma^{-\nhalf}v}{\mathrm d}z {\mathrm d}t.
  }
  Using the smoothness properties of the test function $h$ and $\twonorm{w}=1$, we can write
  \eq{
    &\abs{\inner{\grad h(e^{-t} (x-\mu) + \sqrt{1 - e^{-2t}}\Sigma^{\nhalf}z + \mu)- \grad h(e^{-t} (y-\mu) + \sqrt{1 - e^{-2t}}\Sigma^{\nhalf}z + \mu),w}}\\
    &\leq M_2(h) e^{-t} \twonorm{x - y}.
  }
  
  Using this we can bound the third Stein factor as follows,
  \eqn{
    \frac{\abs{\inner{\Hess f(x)v,w} - \inner{\Hess f(y)v,w}}}{\twonorm{x-y}} \leq&
    M_2(h)\int_0^\infty \frac{e^{-3t}}{\sqrt{1 - e^{-2t}}}\dt
    \int\abs{\inner{\grad \phi_d(z), \Sigma^{-\nhalf}v}}\dz,\\
    \leq&M_2(h)\frac{\pi}{4}\|\Sigma^{-\nhalf}\|_2
    \int\|z\|_2 \phi_d(z)\dz,\\
    =&M_2(h)\frac{\pi}{2\sqrt{2}}\frac{\Gamma(\tfrac{d+1}{2})}{\Gamma(\tfrac{d}{2})}
    \|\Sigma^{-\nhalf}\|_2,\\
    \leq&M_2(h)\frac{\pi}{4}\sqrt{d}
    \|\Sigma^{-\nhalf}\|_2,
  }
  which completes the proof.
\end{proof}

Before we prove Theorem~\ref{thm:martingale-clt},
we define several useful sequences related to the martingale difference sequence $\{X_k\}_{k=1}^n$.
Recalling the basic properties of the martingale difference sequences,
we have
\eq{\label{eq:martingale-difference}
  \Ex{X_k| \F_{k-1}} = 0 \ \text{ and }\ \s_k = \E[X_kX_k^\T|\F_{k-1}]\ \text{ almost surely,}
}
where $0$ is understood to be a vector of zeros in $\reals^d$, and $\s_k$ is a random matrix in $\reals^{d \times d}$.
For $k\in [n]$, we define the partial sums as
$S_k = X_k + S_{k-1}$ starting from $S_0 = 0$,
and the partial covariances as
\eq{\label{eq:cond-covariances}
  \ts_k = \sum_{i=1}^k\s_i, \ \ P_{k+1} = \ts_n - \ts_k, \ \ \bsig_k=\E[\s_k],\
  \text{ and } \ \vn =  \sum_{i=1}^n \bsig_i.
}
We note that using the above definitions, we have $\Var(S_n)= \Sigma $.

 \begin{proof}[Proof of Theorem~\ref{thm:martingale-clt}]
 
  Let $Z',Z_1,...,Z_n $ be a sequence of independent isotropic normal random vectors,
  also independent of $\F_k$ for $k\in [n]$. Define
  \eq{
    Z = \sum_{i=1}^n \s_i^{\nhalf}Z_i,\ \ T_k = \sum_{i=k}^n \s_i^{\nhalf}Z_i
  }
  with $T_1 = Z$ and $T_{n+1} = 0$. Observe that $Z\sim \Gsn(0,V_n)$ since $\ts_n = \vn$
  almost surely under Assumption~\ref{asmp:almost-sure}. We also note that
  $\ts_k \in \F_{k-1}$; thus, $P_k$ and $P_{k+1}\in \F_{k-1}$. This implies
  \eq{\label{Tk}
    T_k | \F_{k-1} \sim \Gsn(0, P_k)\ \text{ and }\   T_{k+1} | \F_{k-1} \sim \Gsn(0, P_{k+1}).
  }

  Define the random variable $R_k := h(S_k + \Tk_{k+1}) - h(S_{k-1} + \Tk_k)$,
  and observe that
  \eq{\label{eq:telescop}
    h(S_n) - h(Z) = \sum_{i=1}^n R_i.
  }
  For $Y\sim \Gsn(0,I)$ independent from all random variables and the filtration, we write
  \eq{
    \E[R_k|\F_{k-1}] = &\Ex{h(S_k + \Tk_{k+1}) - h(S_{k-1}+\Tk_k)| \F_{k-1}},\\
    = &\Ex{h(S_k + \Tk_{k+1}) - h(S_{k-1}+P_k^{\nhalf}Y)| \F_{k-1}},
  }
  where $P_k$ is as in \eqref{Tk}. Using now the Stein equation as in \eqref{eq:stein-eq}, we obtain
  \eq{
    \E[R_k|\F_{k-1}]
    = \E[\inner{P_k,\Hess f_k(S_k + \Tk_{k+1})}
    - \inner{X_k + \Tk_{k+1}, \grad f_k(S_k + \Tk_{k+1})}|\F_{k-1}].
  }
  Observing that $S_k$ and $T_{k+1}$ are independent conditional on $\F_{k-1}$,
  and $T_{k+1}|\F_{k-1} \sim \Gsn(0, P_{k+1})$
  and applying Lemma \ref{lem:stein-factor-bound} to the right hand side above, we get
  \eqn{
    \E[R_k|\F_{k-1}]
    =& \E[\inner{P_k,\Hess f_k(S_k + \Tk_{k+1})}
    - \inner{X_k , \grad f_k(S_k + \Tk_{k+1})} - \inner{P_{k+1}, \Hess f_k(S_k + \Tk_{k+1})}|\F_{k-1}],\\
    =&\E[\inner{\Sigma_k,\Hess f_k(S_k + \Tk_{k+1})} - \inner{X_k , \grad f_k(S_k + \Tk_{k+1})}| \F_{k-1}].
  }
  For the first term above, we write
  \eq{
    \E[\inner{\Sigma_k, \Hess f_k(S_k + \Tk_{k+1})}| \F_{k-1}]
    =&  \int_0^1\E[\inner{\Sigma_k,
      \grad^3 f_k(S_{k-1} sX_k+ \Tk_{k+1})}[X_k] | \F_{k-1}]{\mathrm d}s\\
    \nonumber
    &+\E[\inner{\Sigma_k, \Hess f_k(S_{k-1} + \Tk_{k+1})}| \F_{k-1}]
  }
  and for the second term, we use Taylor's theorem and obtain
  \eq{
    \E[\inner{X_k, \grad f_k(S_k + \Tk_{k+1})| \F_{k-1}}]
    &=\E[\inner{X_k, \grad f_k(S_{k-1}+\Tk_{k+1})}|\F_{k-1}]\\
    \nonumber
    +&\E[\inner{X_k, \Hess f_k(S_{k-1}+\Tk_{k+1})X_k}|\F_{k-1}]\\
    \nonumber
    +&\int_0^1(1-s)\E[\inner{X_k, \grad^3 f_k(S_{k-1} + sX_k + \Tk_{k+1})[X_k,X_k]}|\F_{k-1}]{\mathrm d}s.
  }

  Combining these, we obtain
  \eq{
    \abs{\Ex{R_k|\F_{k-1}}}=&
    \Bigg|\int_0^1\E[\inner{\Sigma_k,
      \grad^3 f_k(S_{k-1} + sX_k+ \Tk_{k+1})[X_k]} | \F_{k-1}]{\mathrm d}s\\
    \nonumber
    &-\int_0^1(1-s)\E[\inner{X_k, \grad^3 f_k(S_{k-1} + sX_k + \Tk_{k+1})[X_k,X_k]}
    |\F_{k-1}]{\mathrm d}s\Bigg|
  }
  For the first term above, we have
  \eq{
    \E[\inner{\Sigma_k,
      \grad^3 f_k(S_{k-1} + sX_k+ \Tk_{k+1})[X_k]} | \F_{k-1}]
    \leq& M_3(f)\E[\Tr(\Sigma_k)
    \twonorm{X_k} | \F_{k-1}],\\
    \nonumber
    \leq& M_3(f_k)\E[ \twonorm{X_k}^3 | \F_{k-1}],
  }
  and similarly for the second term, we have
  \eq{
    \E[\inner{X_k, \grad^3 f_k(S_{k-1} + sX_k + \Tk_{k+1})[X_k,X_k]}
    |\F_{k-1}] \leq M_3(f_k)\E[ \twonorm{X_k}^3 | \F_{k-1}].
  }

  Note that $M_3(f_k)$ is $\F_{k-1}$-measurable.
  Iterating this bound in \eqref{eq:telescop} and taking expectations on both sides,
  we obtain
  \eq{
    \abs{\Ex{h(S_n) - \Ex{h(Z)}}} \leq& \sum_{k=1}^n 1.5 \  \Ex{M_3(f_k)\twonorm{X_k}^3 | \F_{k-1}}.
  }
  Scaling with $\vn^{-\nhalf}$ together with Lemma~\ref{lem:stein-factor-bound}, we obtain
  \eq{
    \abs{\Ex{h(\vn^{-\nhalf}S_n) - \Ex{h(Z)}}}
    \leq \frac{3\pi}{8}\sqrt{d}
    M_2(h)\sum_{k=1}^n \Ex{\|P_k^{-\nhalf}\vn^{\nhalf}\|_2
      \|\vn^{-\nhalf}X_k\|_2^3}.}
\end{proof}

\begin{proof}[Proof of Corollary~\ref{cor:clt-psd-cov}]

  The assumptions in Corollary~\ref{cor:clt-psd-cov} imply
  $n\beta I \succeq \vn \succeq n\alpha I$
  and $(n-k)\beta I \succeq P_{k+1} \succeq (n-k)\alpha I$ almost surely.
  Plugging these in the bound in Theorem~\ref{thm:martingale-clt}, we obtain
  \eq{
    \abs{\Ex{h(\vn^{-\nhalf}S_n) - \Ex{h(Z)}}} &\leq 
    \frac{3\pi}{8}\sqrt{d}
    M_2(h)\sum_{k=1}^n \Ex{\|P_k^{-\nhalf}\vn^{\nhalf}\|_2
      \|\vn^{-\nhalf}X_k\|_2^3},\\
    &\leq \frac{3\pi}{8}
    M_2(h)\frac{\gamma\sqrt{\beta}d^2}{\alpha^{2} n}\sum_{k=1}^n \frac{1}{\sqrt{n-k+1}}.
  }
  Finally, we notice that $\sum_{k=1}^n \tfrac{1}{\sqrt{n-k+1}} = \sum_{k=1}^n \tfrac{1}{\sqrt{k}} < 2\sqrt{n}$ which concludes the proof.
\end{proof}

\begin{proof}[Proof of Corollary~\ref{cor:clt-non-psd-cov}]
  For a given matrix $A \in \reals^{d \times d}$,
  define the random variable $\Tkp_k = \Tk_k + AZ'$ where $\Tk_k$ is as in the proof of Theorem~\ref{thm:martingale-clt},
  and let $$R_k \defeq h(S_k + \Tkp_{k+1}) - h(S_{k-1} + \Tkp_k).$$
  Following the same steps as in the proof of Theorem~\ref{thm:martingale-clt} where $\Tk_k$ replaced
  with $\Tkp_k$, we obtain
  \eq{
    &\abs{\Ex{h(\vn^{-\nhalf} S_n) - \Ex{h(Z)}}}\\
    &\leq  \frac{3\pi}{8}\sqrt{d}
    M_2(h)\sum_{k=1}^n
    \Ex{\|(\vn^{\nhalf} (P_k +A\vn^{-1}A^\T)^{-1} \vn^{\nhalf}\|_2^{\nhalf}
      \|\vn^{-\nhalf}X_k\|_2^3} + 2M_1(h)\Tr(A\vn^{-1}A^\T)^{\nhalf} .
  }

  Next, we define the stopping times,
  \eq{
    \tau_0 = 0, \ \ \tau_k = \sup \{m\geq 0 \ : \ \ts_m \preceq \tfrac{k}{n} \vn  \}\ \text{ for }\ 1 \leq k \leq n.
  }
  Since $\{ \tau_k = m\} = \{ \ts_m \preceq \tfrac{k}{n} \vn \} \cap \{\ts_{m+1} \succ \tfrac{k}{n} \vn \}$,
  and each of these events are $\F_m$-measurable, $\tau_k$ is a stopping time for each $k$.
  If $j \leq \tau_k$, we can write
  \eq{
    P_j = \ts_n - \ts_{j-1} = \vn - \ts_{j-1} \succeq \tfrac{n-k}{n} \vn .
  }
  Therefore we can write,
  \eq{
    &\Ex{
      \sum_{j=\tau_{k-1}+1}^{\tau_k}
      \|\vn^{\nhalf} (P_j + \vn/n)^{-1} \vn^{\nhalf} \|_2^{\nhalf}
      \|\vn^{-\nhalf} X_j\|_2^3 
    }\\
    &=\Ex{
      \sum_{j=1}^{n}
      \Ex{
        \|\vn^{\nhalf} (P_j + \vn/n)^{-1} \vn^{\nhalf} \|_2^{\nhalf}
        \|\vn^{-\nhalf} X_j\|_2^3 \ind{\tau_{k-1} < j \leq \tau_k}
        | \F_{j-1}
      }
    }, \\
    &\leq
    \sqrt{\frac{n}{n-k+1}}
    \Ex{
      \sum_{j=1}^{n}
      \ind{\tau_{k-1} < j \leq \tau_k}
      \Ex{
        \|\vn^{-\nhalf} X_j\|_2^3 
        | \F_{j-1}
      }
    },\\
    &\leq
    \delta\|\vn^{-\nhalf}\|^3_2    \sqrt{\frac{n}{n-k+1}}
    \Ex{
      \sum_{j=1}^{n}
      \ind{\tau_{k-1} < j \leq \tau_k}
      \Tr(\s_j)
    },\\
    &\leq
    \delta\|\vn^{-\nhalf}\|^3_2   \sqrt{\frac{n}{n-k+1}}
    \Ex{
      \Tr(\ts_{\tau_k} - \ts_{\tau_{k-1}})
    },\\
    &\leq
    \delta\|\vn^{-\nhalf}\|^3_2   \sqrt{\frac{n}{n-k+1}}
    ( \tfrac{1}{n}\Tr(\vn) + \beta^{2/3}).
  }

  Next, we sum over $k$ and obtain
  \eq{
    &\sum_{k=1}^{n}  \Ex{
      \|\vn^{\nhalf} (P_k + \vn/n)^{-1} \vn^{\nhalf} \|_2^{\nhalf}
      \|\vn^{-\nhalf} X_k\|_2^3 
    }, \\
    &\leq \delta\|\vn^{-\nhalf}\|^3_2
    ( \tfrac{1}{n}\Tr(\vn) + \beta^{2/3})
    \sum_{k=1}^{n}
    \sqrt{\frac{n}{k}},\\
    &\leq 2\delta\|\vn^{-\nhalf}\|^3_2
    ( \tfrac{1}{n}\Tr(\vn) + \beta^{2/3})n.
  }

  Therefore, choosing $A = \vn/\sqrt{n}$ we obtain
  \eq{
    &\abs{\Ex{h(\vn^{-\nhalf} S_n) - \Ex{h(Z)}}}\\
    &\leq  \frac{3\pi}{4}\sqrt{d}
    M_2(h) \delta\|\vn^{-\nhalf}\|^3_2
    \big\{ \tfrac{1}{n}\Tr(\vn) + \beta^{2/3} \big \}n
    + \frac{2M_1(h)}{\sqrt{n}}\Tr(\tfrac{1}{n}\vn)^{\nhalf} .
  }
\end{proof}

\section{Proofs for Section~\ref{sec:linearsetting}}
\begin{proof}[Proof of Theorem~\ref{thm:linearsetting}]
The proof involves adapting the proof of Theorem in~\cite{polyak1992acceleration} to our non-asymptotic setting and then applying our result from Theorem~\ref{thm:martingale-clt}. To proceed, define 
\begin{align*}
B_j^t = \eta_j \sum_{i=j}^{t-1} \prod_{k=j+1}^i [I - \eta_k A]
\end{align*}
and set $B_t = B_0^t$ and $W_j^t = B_j^t - A^{-1}$. Then, by Lemma 2 in~\cite{polyak1992acceleration}, for the above sequence of matrices $B_t$, $t \in \mathbb{N}$ and the triangular array of matrices $W^t_j$, $t \in \mathbb{N}$ and $j \leq t$, we have the following decomposition for $\bar{\Delta}_t $:
\begin{equation}
\label{Delta_notation}
\sqrt{t} \bar{\Delta}_t = \underbrace{\frac{1}{\sqrt{t}\eta_0} B_t \Delta_0}_{I_1} + \underbrace{\frac{1}{\sqrt{t}} \sum_{j=1}^{t-1}A^{-1}\zeta_j}_{I_2} + \underbrace{\frac{1}{\sqrt{t}} \sum_{j=1}^{t-1} W^t_j \zeta_j}_{I_3}.
\end{equation}
In addition, we have $\|B_t\|_2 \leq K_2 < \infty$. We now proceed to provide a non-asymptotic result about the rate of convergence of the iterates to normality. Recall that, with $Z$ denoting an isotropic normal random vector, we are looking for a non-asymptotic bound on $\E|h(\sqrt{t} \bar{\Delta}_t) - h(Z)|$. Using the triangle inequality,
\begin{align}
\label{term1_to_bound}
\left|\E\left[h(\sqrt{t} \bar{\Delta}_t)\right] - \E\left[h(A^{-1}V^{1/2}Z)\right]\right| & \leq \left|\E\left[h(I_2)\right] - \E\left[h(A^{-1}V^{1/2}Z)\right]\right|\\
\label{term2_to_bound}
& \quad + \left|\E \left[h\left(\sqrt{t} \bar{\Delta}_t\right) - h(I_2)\right]\right|,
\end{align}
where $I_2$ is as in \eqref{Delta_notation}.\\
{\textbf{Bound for \eqref{term1_to_bound}}}. Note that $I_2$ forms a martingale difference sequence. Hence, we use Theorem~\ref{thm:martingale-clt} to handle this term.  Under our assumption, first note that we have
\begin{align*}
P_{k} = (t-(k-1)) A^{-1}V A^{-1}\qquad \Sigma_t = t A^{-1}V A^{-1}.
\end{align*}
Therefore,
\begin{align}\label{eq:termi2}
\nonumber \text{R.H.S. of}~\eqref{term1_to_bound} & =\leq \frac{3\pi}{8} \sqrt{d} M_2(h) \sum_{k=1}^t \E \left[ \sqrt{\frac{t}{(t-(k-1))}} ~\left\| \frac{ (A^{-1} V A^{-1})^{-1/2}\zeta_{k}}{t^{1/2}}\right\|_2^3\right] \\
 & \leq \sum_{k=1}^t \E \left[ \frac{ 1.18 \sqrt{d} M_2(h)~\left\|  \left[A^{-1} V A^{-1}\right]^{-1/2}\zeta_{k}\right\|_2^3}{t~\sqrt{(t-(k-1))}} \right] 
\end{align}
{\textbf{Bound for \eqref{term2_to_bound}}}. A second-order Taylor expansion of $h(I_1+ I_2 +I_3)$ around $I_2$ yields
\begin{equation}
\label{Taylor_expansion}
h\left(I_1 + I_2 + I_3\right) = h\left(I_2\right) + \langle\nabla h(I_2), I_1 + I_3\rangle + \frac{1}{2}\langle \nabla^2 h(I_2 + \bar{c}(I_1 + I_3))(I_1 + I_3), I_1 + I_3\rangle,
\end{equation}
where the last term on the right-hand side of \eqref{Taylor_expansion} is the remainder term expressed in Lagrange's form for $\bar{c}$, a positive constant in $(0,1)$. The result in \eqref{Taylor_expansion} and the triangle inequality lead to
\begin{align}
\label{bound_term2_general}
\nonumber \eqref{term2_to_bound} & = \left|\E\left[\langle\nabla h(I_2), I_1 + I_3\rangle + \frac{1}{2}\langle \nabla^2 h(I_2 + \bar{c}(I_1 + I_3))(I_1 + I_3), I_1 + I_3\rangle\right]\right|\\
\nonumber & \leq  \E \left|\langle \nabla h(I_2), (I_1+I_3)\rangle\right| + \frac{1}{2}\E\left|\langle \nabla^2 h(I_2 + \bar{c}(I_1 + I_3))(I_1 + I_3), I_1 + I_3\rangle\right|\\ 
\nonumber & \leq  M_1(h)\E\left[\|I_1+I_3\|_2\right] + \frac{1}{2}\E\left[\|\nabla^2 h(I_2 + \bar{c}(I_1 + I_3))(I_1 + I_3)\|_2\|I_1 + I_3\|_2\right]\\
\nonumber & \leq M_1(h)\E\left[\|I_1+I_3\|_2\right] + \frac{M_2(h)}{2}\E\left[\|I_1 + I_3\|^2_2\right]\\
& \leq M_1(h)\E\left[\|I_1\|_2+\|I_3\|_2\right] + M_2(h)\E\left[\|I_1\|^2_2 + \|I_3\|^2_2\right].
 \end{align}
First, we could upper bound $\E\left[\|I_1\|_2\right]$ as follows.
\begin{equation}\label{eq:termi1I_1}
\E\|I_1\|_2 =  \frac{1}{\sqrt{t}\eta_0}  \E \left[ \left\|B_t \Delta_0 \right\|_2\right] \leq  \frac{1}{\sqrt{t}\eta_0}  \E \left[ \|B_t\|_2 \|\Delta_0\|_2\right ] =  \frac{K_2\left[ \E\|\Delta_0\|_2\right]}{\sqrt{t}\eta_0}
\end{equation}
Following the same approach, it is straightforward that
\begin{equation}
\label{eq:termi1I_1_squared}
\E\left[\|I_1\|_2^2\right] \leq \frac{K_2^2\left[ \E\|\Delta_0\|_2^2\right]}{t\eta_0^2}.
\end{equation}
Continuing now to find an upper bound for $\E\left[\|I_3\|_2\right]$ and $\E\left[\|I_3\|_2^2\right]$, note that by definition we have
\begin{align*}
W_j^t = A^{-1} T_j^t +  S_j^t,
\end{align*}
where $T_j^t =\prod_{k=j+1}^i [I - \eta_k A] $ and $S_j^t = \eta_j\sum_{i=j}^{t-1}T_j^t $ . Hence, we have
\begin{align*}
\frac{1}{t} \sum_{j=1}^{t-1} \| W_j^t\|_2^2 & =\frac{1}{t} \sum_{j=1}^{t-1} \| A^{-1} T_j^t +  S_j^t \|_2^2 \\
&\leq\frac{2}{t} \sum_{j=1}^{t-1} \left[  \| A^{-1} T_j^t\|_2 + \| S_j^t\|_2\right]^2 \leq \frac{4}{t} \sum_{j=1}^{t-1} \left[\| A^{-1} T_j^t\|_2^2 + \| S_j^t\|_2^2\right]\\
&=\frac{4C}{t} \sum_{j=1}^{t-1} \left[  \| T_j^t\|_2^2 \right]+ \frac{4}{t} \sum_{j=1}^{t-1}\left[ \| S_j^t\|_2^2\right]
\end{align*}
Now, we have by Lemma 1, Part 3 in~\cite{polyak1992acceleration}
\begin{align*}
\| T_j^t\|_2^2  \leq K \left[ e^{\left( -c_1 \sum_{i=j}^{t-1} \eta_i \right)  }\right]^2
 \end{align*}
 Furthermore, we have from page 846 in~\cite{polyak1992acceleration}, the following estimate for $ \| S_j^t\|^2_2 $, with $m_j^i \defeq \sum_{k=j}^i \eta_k$ and some $c_2 >0$,
\begin{align*}
 \| S_j^t\|_2^2 \leq \left[\frac{C'}{\eta_j^{1-c_2}} \sum_{i=j}^t m_j^t e^{-\lambda m_j^t} (m_j^i -m_j^{i-1})\right]^2
 \end{align*}
Combining all the above estimates, we have for some constant $K$, 
\begin{align*}
\frac{1}{t} \sum_{j=1}^{t-1} \| W_j^t\|_2^2 \leq  \frac{K}{t} \sum_{j=1}^{t-1} \left\{  \left[ e^{\left( -c_1 \sum_{i=j}^{t-1} \eta_i   \right)  }\right]^2   + \left[\frac{C'}{\eta_j^{1-c_2}} \sum_{i=j}^t m_j^t e^{-\lambda m_j^t} (m_j^i -m_j^{i-1})\right]^2 \right\}
\end{align*}
Using the above result and the fact that $\E\left[\zeta_i^{\intercal}\zeta_j\right] = 0$ for $i \neq j$, we have that
\begin{align}\label{eq:termi3_squared}
\nonumber \E \left[\|I_3\|_2^2\right] & = \frac{1}{t}\E\left[ \left \|\sum_{j=1}^{t-1} W^t_j \zeta_j\right \|_2^2\right] \leq K_d \left[\frac{1}{t} \sum_{j=1}^{t-1} \| W_j^t\|_2^2\right]\\
& \leq \frac{K_d~K}{t} \sum_{j=1}^{t-1} \left\{  \left[ e^{\left( -c_1 \sum_{i=j}^{t-1} \eta_i   \right)  }\right]^2   + \left[\frac{C'}{\eta_j^{1-c_2}} \sum_{i=j}^t m_j^t e^{-\lambda m_j^t} (m_j^i -m_j^{i-1})\right]^2 \right\}
\end{align}
Since $\E[\|I_3\|_2] \leq  \left[\E \left[\|I_3\|_2^2 \right]\right]^{1/2}$, then using the results in \eqref{eq:termi1I_1}, \eqref{eq:termi1I_1_squared}, \eqref{eq:termi3_squared}, and continuing from \eqref{bound_term2_general}, we have that
\begin{align}
\label{bound2}
\nonumber \eqref{term2_to_bound} \leq & \frac{M_1(h)}{\sqrt{t}}\left[\frac{K_2\left[ \E\|\Delta_0\|_2\right]}{\eta_0} + \sqrt{K_d~K \sum_{j=1}^{t-1} \left\{  \left[ e^{\left( -c_1 \sum_{i=j}^{t-1} \eta_i   \right)  }\right]^2   + \left[\frac{C'}{\eta_j^{1-c_2}} \sum_{i=j}^t m_j^t e^{-\lambda m_j^t} (m_j^i -m_j^{i-1})\right]^2 \right\}   }\right]\\
& + \frac{M_2(h)}{t}\left[\frac{K_2^2\left[ \E\|\Delta_0\|_2^2\right]}{\eta_0^2} + K_d~K \sum_{j=1}^{t-1} \left\{  \left[ e^{\left( -c_1 \sum_{i=j}^{t-1} \eta_i   \right)  }\right]^2   + \left[\frac{C'}{\eta_j^{1-c_2}} \sum_{i=j}^t m_j^t e^{-\lambda m_j^t} (m_j^i -m_j^{i-1})\right]^2 \right\}\right].
\end{align}
The results now in \eqref{eq:termi2} and \eqref{bound2} give the desired result.
\end{proof}

\section{Proofs for Section~\ref{sec:sgdsetting}}\label{appendixend}

 \begin{proof}[Proof of Theorem~\ref{thm:sgdsetting}]
 To proceed, first note that~\eqref{eq:sgdupdate} and~\eqref{eq:noisygrad}  could be recast as follows:
 \begin{align*}
 \theta_t & = \theta_{t-1} -\eta_t \left[ \nabla f(\theta_{t-1}) + \zeta_t\right]+ \eta_t \E \left[\nabla f(\theta_{t-1})  \right] -   \eta_t \E\left[\nabla f(\theta_{t-1})  \right] \\
 & =\theta_{t-1} -\eta_t \E\left[\nabla f(\theta_{t-1})\right] + \eta_t \left[ \E\left[\nabla f(\theta_{t-1})\right]  - \nabla f(\theta_{t-1})- \zeta_t\right]\\
 &\defeq \theta_{t-1} - \eta_t \left[R(\theta_{t-1})\right] + \eta_t X_t\left(\theta_{t-1} -\theta^*\right).
  \end{align*}
Note that $X_t(\theta_{t-1} -\theta^*)$ forms a martingale difference sequence. Whenever there is no confusion, we just call it as $X_t$ in the rest of the proof. Recall that the covariance matrix of $X_t$ as
\begin{align*}
V_t \aseq \E \left[X_t X_t^\top|\mathcal{F}_{t-1}  \right],
\end{align*}
and note that $\Sigma_t \defeq \sum_{i=1}^t V_i$. Now, define
\begin{align*}
B_j^t = \eta_j \sum_{i=j}^{t-1} \prod_{k=j+1}^i [I - \eta_k \nabla^2f(\theta^*)], \qquad 
W_j^t = B_j^t - \left[\nabla^2 f(\theta^*)\right]^{-1},
\end{align*}
and set $B_t = B_0^t$. Then, in this case, we have the following decomposition similar to that of linear setting (specifically,~\eqref{Delta_notation}):
  \begin{align*}
 \Sigma_t^{-1/2} \bar{\Delta}_t & = \underbrace{\frac{2\Sigma_t^{-1/2}}{t~\eta_0} B_t \Delta_0}_{I_1} + \underbrace{\frac{\Sigma_t^{-1/2}}{t} \sum_{j=1}^{t-1}\nabla^2f(\theta^*)^{-1}X_j}_{I_2} + \underbrace{\frac{\Sigma_t^{-1/2}}{t} \sum_{j=1}^{t-1} W^t_j X_j}_{I_3}  \\ & \;\; +\underbrace{\frac{\Sigma_t^{-1/2}}{t} \sum_{j=1}^{t-1} [\left[\nabla^2f(\theta^*)\right]^{-1} + W^t_j] \left( \left[\bar{R}(\Delta_j) - \nabla^2f(\theta^*) \Delta_j\right]\right)  }_{I_4},
\end{align*}
where $\bar{R}(\theta) = R(\theta - \theta^*)$. First note that by triangle inequality, we have
\begin{align}
\label{eq:sgdterm1} \left|\E\left[h(\Sigma_t^{-1/2}\bar{\Delta}_t)\right] - \E\left[h(Z)\right]\right| & \leq \left|\E\left[h(I_2)\right] - \E\left[h(Z)\right]\right|\\
\label{eq:sgdterm2} 
& \quad + \left|\E \left[h\left(\Sigma_t^{-1/2} \bar{\Delta}_t\right) - h(I_2)\right]\right|.
\end{align}
{\textbf{Bound for \eqref{eq:sgdterm1}}}. Note that $I_2$ forms a martingale difference sequence. Hence, we use Theorem~\ref{thm:martingale-clt} to handle this term.  Under our assumption, similar to the proof of Theorem~\ref{thm:linearsetting}, we have
\begin{align*}
\nonumber \text{R.H.S. of}~\eqref{eq:sgdterm1} & \leq \frac{3\pi}{8t} \sqrt{d} M_2(h) \sum_{k=1}^t \E\left[ \left\|\Sigma_t^{1/2}  P_k^{-1} \Sigma_t^{1/2} \right\|^{1/2}_2 \left\| \left[[\nabla^2 f(\theta^*)]^{-1} ~\Sigma_t ~[\nabla^2 f(\theta^*)]^{-1} \right]^{-1/2} X_k \right\|_2^3 \right]
\end{align*}

\noindent {\textbf{Bound for \eqref{eq:sgdterm2}}}. To handle this term, we do a second-order taylor expansion of $h(I_1+ I_2 +I_3+I_4)$ around $I_2$, to get
\begin{align}
\label{sgd_Taylor_expansion}
h\left(I_1 + I_2 + I_3+I_4\right) &= h\left(I_2\right) + \langle\nabla h(I_2), I_1 + I_3+I_4\rangle \\
\nonumber &+  \frac{1}{2}\langle \nabla^2 h(I_2 + \bar{c}(I_1 + I_3+I_4))(I_1 + I_3+I_4), I_1 + I_3+I_4\rangle,
\end{align}
where the last term on the right-hand side of \eqref{sgd_Taylor_expansion} is the remainder term expressed in Lagrange's form for $\bar{c}$, a positive constant in $(0,1)$. The result in \eqref{sgd_Taylor_expansion} and the triangle inequality lead to
\begin{align}
\nonumber \eqref{eq:sgdterm2} & = \left|\E\left[\langle\nabla h(I_2), I_1 + I_3+I_4\rangle + \frac{1}{2}\langle \nabla^2 h(I_2 + \bar{c}(I_1 + I_3+I_4))(I_1 + I_3+I_4), I_1 + I_3+I_4\rangle\right]\right|\\
\nonumber & \leq  \E \left|\langle \nabla h(I_2), (I_1+I_3+I_4)\rangle\right| + \frac{1}{2}\E\left|\langle \nabla^2 h(I_2 + \bar{c}(I_1 + I_3+I_4))(I_1 + I_3+I_4), I_1 + I_3+I_4\rangle\right|\\ 
\nonumber & \leq  M_1(h)\E\left[\|I_1+I_3+I_4\|_2\right] + \frac{1}{2}\E\left[\|\nabla^2 h(I_2 + \bar{c}(I_1 + I_3+I_4))(I_1 + I_3)\|_2\|I_1 + I_3+I_4\|_2\right]\\
\nonumber & \leq M_1(h)\E\left[\|I_1+I_3+I_4\|_2\right] + \frac{M_2(h)}{2}\E\left[\|I_1 + I_3+I_4\|^2_2\right]\\
& \leq M_1(h)\E\left[\|I_1\|_2+\|I_3\|_2+\|I_4\|_2\right] + \frac{3}{2}M_2(h)\E\left[\|I_1\|^2_2 + \|I_3\|^2_2+ \|I_4\|^2_2\right].
 \end{align}
Now, note that terms involving $I_1$ and $I_3$ could be handled in  a way analogous to the proof of Theorem~\ref{thm:linearsetting}. Hence, we deal with bounding the norm of terms involving $I_4$. Furthermore, note that as a consequence of the Hessian-smoothness assumption, we also have the following to be true for any two points $\theta, \theta' \in \mathbb{R}^d$:
\begin{align*}
\| \nabla f(\theta) - \nabla f(\theta') \|_2 & \leq L \| \theta- \theta'\|_2,\\
\|\nabla f(\theta')-\nabla f(\theta)-\nabla^2 f(\theta)(\theta'-\theta)\|_2 &\le \frac{L_H}{2} \|\theta'-\theta\|_2^2.
\end{align*}
Hence, in this case, we have that
 \begin{align*}
 \|I_4\|_2  &\leq \frac{K\| \Sigma_t^{-1/2} \|_2}{t} \sum_{j=1}^{t-1} \left\| \nabla^2f(\theta^*)^{-1} + W^t_j\right\|_2 \left\| \left[\bar{R}(\Delta_j) - \nabla^2f(\theta^*) \Delta_j\right]\right\|_2\\
 & \leq  \frac{K\|\Sigma_t^{-1/2} \|_2}{t} \sum_{j=1}^{t-1}  \left\| \left[\bar{R}(\Delta_j) - \nabla^2f(\theta^*) \Delta_j\right]\right\|_2\\
 & \leq \frac{K\|\Sigma_t^{-1/2} \|_2 L_H}{2t} \sum_{j=1}^{t-1}  \left[ \| \Delta_j\|_2 \right].
 \end{align*}
 Hence, we have
 \begin{align*}
 \E \left[\|I_4\|_2 \right] \leq \frac{K\|\Sigma_t^{-1/2} \|_2L_H}{2t} \sum_{j=1}^{t-1} \E  \left[ \| \Delta_j\|_2 \right].
 \end{align*}
Now, by the proof of Lemma A.3 in~\cite{su2018statistical}, we have the following bound
\begin{align*}
\E \left[\| \Delta_j\|_2^2 \right] \leq \frac{2C}{\mu} \eta_j.
\end{align*}
Hence, we have that
\begin{align*}
\E \left[ \| \Delta_j\|_2 \right] \leq \sqrt{ \E \| \Delta_j\|_2^2} \leq \sqrt{\frac{2C}{\mu} \eta_j }.
\end{align*}
Combining the above equations, we finally have the following bound:
\begin{align*}
\E \left[\|I_4\|_2  \right] \leq \frac{\|\Sigma_t^{-1/2} \|_2}{t} \left[\frac{KL_H}{\sqrt{2\mu}}\sum_{j=1}^{t-1} \sqrt{\eta_j}\right].
\end{align*}
Similarly, we have that
 \begin{align*}
 \|I_4\|^2_2  &\leq \frac{K\|\Sigma_t^{-1/2} \|_2^2}{t^2} \sum_{j=1}^{t-1} \left\| \nabla^2f(\theta^*)^{-1} + W^t_j\right\|^2_2 \left\| \left[\bar{R}(\Delta_j) - \nabla^2f(\theta^*) \Delta_j\right]\right\|^2_2 \leq \frac{K^2 \|\Sigma_t^{-1/2} \|_2^2L^2_H}{t^2} \sum_{j=1}^{t-1}  \left[ \| \Delta_j\|^2_2 \right].
\end{align*}
To calculate the expectation, we have 
 \begin{align*}
 \E \left[\|I_4\|^2_2 \right] &\leq \frac{\|\Sigma_t^{-1/2} \|^2_2K^2 L^2_H}{t^2} \sum_{j=1}^{t-1} \E  \left[ \| \Delta_j\|^2_2 \right]  \leq\frac{\|\Sigma_t^{-1/2} \|_2}{t^2} \left[\frac{K L^2_H}{\mu} \sum_{j=1}^{t-1}   \eta_j \right].
 \end{align*}
 Going through the same steps as that in the proof of Theorem~\ref{thm:linearsetting}, we also have
 \begin{align*}
 \E\left[\|I_1\|_2^2\right] &\leq \frac{K_2^2\|\Sigma_t^{-1/2} \|^2_2\left[ \E\|\Delta_0\|_2^2\right]}{t^2\eta_0^2}\\
  \E \left[\|I_3\|_2^2\right] & \leq \frac{K_d\|\Sigma_t^{-1/2} \|^2_2~K}{t^2} \sum_{j=1}^{t-1} \left\{  \left[ e^{\left( -c_1 \sum_{i=j}^{t-1} \eta_i   \right)  }\right]^2   + \left[\frac{C'}{\eta_j^{1-c_2}} \sum_{i=j}^t m_j^t e^{-\lambda m_j^t} (m_j^i -m_j^{i-1})\right]^2 \right\}.
\end{align*}
where $K_d =\E \left[ \| X_t\|_2^2|\mathcal{F}_{t-1}\right] $. By using the above results and the fact that $\E[\|v\|_2] \leq  \left[\E \left[\|v\|_2^2 \right]\right]^{1/2}$ for a vector $v \in \mathbb{R}^d$, we get the desired result.
\end{proof}



\end{document}